\date{\today}
\theoremstyle{theorem}
    \newtheorem{theorem}{Theorem}
    \newtheorem{lemma}{Lemma}
    \newtheorem{conjecture}{Conjecture}
\theoremstyle{definition} 
    \newtheorem{remark}{Remark}
    \newtheorem{example}[theorem]{Example}
    \newtheorem{exercise}[theorem]{Exercise}
\def\suchthat{\; : \;}
\def\iff{\Longleftrightarrow}
\def\e{\epsilon}
\def\GG{{\mathcal G}}
\def\Z{\mathbb{Z}}
\def\N{\mathbb{N}}
\def\Z{\mathbb{Z}}
\def\l{\left}
\def\r{\right}
\def\<{\langle}
\def\>{\rangle}
\newcommand{\one}{{\mathbf 1}}
\newcommand{\E}{\mbox{\bf E}}
\def\P{{\bf P}}
\def\given{\left.\vphantom{\hbox{\Large (}}\right|}
\newcommand\mnote[1]{} 
\newcommand\be{\begin{equation*}}
\newcommand\ee{\end{equation*}}
\newcommand\ben{\begin{equation}}
\newcommand\een{\end{equation}}
\newcommand\bes{\begin{eqnarray*}}
\newcommand\ees{\end{eqnarray*}}
\newcommand\bex{\begin{exercise}}
\newcommand\eex{\end{exercise}}
\newcommand\beg{\begin{example}}
\newcommand\eeg{\end{example}}
\newcommand\benu{\begin{enumerate}}
\newcommand\eenu{\end{enumerate}}
\newcommand\beit{\begin{itemize}}
\newcommand\eeit{\end{itemize}}
\newcommand\berk{\begin{remark}}
\newcommand\eerk{\end{remark}}
\newcommand\bdefn{\begin{defintion}}
\newcommand\edefn{\end{definition}}
\newcommand\bthm{\begin{theorem}}
\newcommand\ethm{\end{theorem}}
\newcommand\bprf{\begin{proof}}
\newcommand\eprf{\end{proof}}
\newcommand\blem{\begin{lemma}}
\newcommand\elem{\end{lemma}}
\newcommand{\dist}{\mbox{\rm dist}}
\newcommand{\sm}{{\raise0.3ex\hbox{$\scriptstyle \setminus$}}}
\def\l{\left}
\def\r{\right}
\def\eps{\epsilon}
\def\CHI{\mathchoice%
{\raise2pt\hbox{$\chi$}}%
{\raise2pt\hbox{$\chi$}}%
{\raise1.3pt\hbox{$\scriptstyle\chi$}}%
{\raise0.8pt\hbox{$\scriptscriptstyle\chi$}}}
\def\smalloplus{\raise1pt\hbox{$\,\scriptstyle \oplus\;$}}
\title[Shotgun Assembly of random graphs]{Shotgun Assembly of  Linial-Meshulam model}
\author{Kartick Adhikari}
\address{Department of Mathematics, Indian Institute of Science Education and Research, Bhopal 462066}
\email{kartickmath [at] gmail.com}
\author{Sukrit Chakraborty}
\address{Stat-Math Unit, Indian Statistical Institute, 203 B T Road, Kolkata 700108}
\email{sukrit049 [at] gmail.com}
\date{\today}
\thanks{2010 Mathematics Subject Classification:  05C80  }
\begin{document}

\newcommand{\acr}{\newline\indent}

\keywords{Shotgun assembly,  graph, Linial-Meshulam model}
	\begin{abstract}
		In a recent paper \cite{gaudio2020shotgun}, J. Gaudio and E. Mossel  studied the shotgun assembly of the Erd\H os-R\'enyi graph $\mathcal G(n,p_n)$ with $p_n=n^{-\alpha}$, and  showed that the graph is reconstructable form its $1$-neighbourhoods if $0<\alpha < 1/3$ and not reconstructable  from its $1$-neighbourhoods if $1/2 <\alpha<1$. In this article, we generalise the notion of reconstruction of graphs to the reconstruction of simplicial complexes. We show that the Linial-Meshulam model $Y_{d}(n,p_n)$ on $n$ vertices with $p_n=n^{-\alpha}$ is reconstructable from its $1$-neighbourhoods when $0< \alpha < 1/3$ and is not reconstructable form its $1$-neighbourhoods when $1/2 < \alpha < 1$.
	\end{abstract}
	\maketitle
	
	\section{Introduction}
	In \cite{mossel2015shotgun}, Mossel and Ross introduced the shotgun assembly of graphs. The shotgun assembly of a graph means reconstructing the graph from a collection of vertex neighbourhoods.  
	 The motivation  comes from DNA shotgun assembly (determining a DNA sequence from multiple short nucleobase chains), reconstruction of neural networks (reconstructing a big neural network from subnetworks), and the random jigsaw puzzle problem. See \cite{MR3969756} and references therein.  
	
	The recent development of random jigsaw puzzles can be found in \cite{bordenave2020shotgun} and \cite{martinsson2016shotgun}. The graph shotgun assembly for various models was studied extensively. For examples, the random regular graphs and the labelled graphs were considered in \cite{mossel2015shotgun} and \cite{MR3969756}, respectively. The reconstruction of the {E}rd{\H{o}}s--{R}{\'e}nyi  graph is well studied.
	
	The {E}rd{\H{o}}s--{R}{\'e}nyi  graph  \cite{ergraph} \cite{erdos1960evolution}, denoted by $\GG(n,p_n)$, is a random graph on $n$ vertices, where each edge is added independently with probability $p_n\in [0,1]$. In \cite{gaudio2020shotgun}, Gaudio and Mossel showed that $\GG(n,p_n)$ with $p_n=n^{-\alpha}$ is reconstructable if $0<\alpha<1/3$ and not reconstructable if $1/2<\alpha<1$ from its $1$-neighbourhoods.  Later, Huang and Tikhomirov showed that 
	$\mathcal G(n,p_n)$ with $p_n=n^{-\alpha}$ is reconstructable if $0<\alpha<1/2$ and not reconstructable if $1/2<\alpha<1$ from its $1$-neighbourhoods \cite{huang2021shotgun}. The reconstruction of  $\GG(n,p_n)$ from its $3$ and $2$-neighbourhoods are considered in \cite[Theorem 4.5]{MR3969756}  and \cite[Theorem 4]{gaudio2020shotgun} respectively.

	In this article, by generalising the notion of graph shotgun assembly, we introduce the notion of shotgun assembly of simplicial complexes. See Section \ref{sec:preli}. The problem of shotgun assembly essentially tells us whether the local structure contains all the information about its global structure. Here we only consider the shotgun assembly problem for the Linial-Meshulam model. However, our notion of  shotgun assembly of simplicial complexes can potentially be used for other simplicial complexes, for example, the multi-parameter random simplicial complexes \cite{CF2016book, FCF2019}.

	The Linial-Meshulam model, denoted by $Y_d(n,p_n)$, is a random $d$-dimensional simplicial complex on $n$ vertices with a complete $(d-1)$-skeleton, in
	which each $d$-dimensional simplex is added independently with probability $p_n\in [0,1]$. See Section \ref{sec:preli} for details. In \cite{linialmeshulam}, Linial and Meshulam introduced this model for $d=2$. Later, it was extended for $d\ge 3$ by Meshulam and Wallach \cite{meshulamwallach}. After that this model has been studied extensively, for example see \cite{HJ2013, GW2016, LP2016, KR2017, HS2017, PR2017, LP2019, HK2019, LP2022}.	Observe that  $Y_1(n,p_n)=\GG(n,p_n)$, in other words, $d=1$ gives the {E}rd{\H{o}}s--{R}{\'e}nyi graph.

	 We show that $Y_d(n,p_n)$ for any $d \in \mathbb{N}$ with $p_n=n^{-\alpha}$ is reconstructable if $0<\alpha<1/3$ and not reconstructable if $1/2<\alpha<1$ from its $1$-neighbourhoods.  See Theorems \ref{main.thm.1} and \ref{main.thm.2}. The  meaning of reconstruction of a simplicial complex  from its $1$-neighbourhoods is given in Section \ref{sec:preli}.  We believe that the range $0<\alpha<1/3$ of reconstruction is not optimal, the optimal range should be $0<\alpha<1/2$. See Conjecture \ref{conj}.
	
	The rest of the article is organized as follows.  In Section \ref{sec:preli} we introduce the definition of the reconstruction of simplicial complexes, and relevant notation. The main two results are stated in Section \ref{sec.mainres}. The proofs of Theorems \ref{main.thm.1} and \ref{main.thm.2} are given in Sections \ref{sec.pfmain1} and \ref{sec.pfmain2} respectively.

\section{Preliminaries}\label{sec:preli}
 Let $X_0$ be a finite set. A {\it finite abstract simplicial complex} $X$ on $X_0$ is a collection of subsets $S\subset X_0$ satisfying the following property
\[
 T\subset S  \mbox{ and } S\in X \implies T\in X.
\]
For example, $X=\{\emptyset, \{1\},\{2\},\{3\},\{1,2\},\{2,3\},\{1,3\},\{1,2,3\}\}$ is an abstract simplicial complex on $\{1,2,3\}$. We call a set $S\in X$ with $|S|=k+1$ as a $k$-dimensional simplex. In particular, we call a vertex as a `$0$-simplex', an edge as a $1$-simplex, a triangle as a $2$-simplex, and so on. Also a convention that $\dim(\emptyset)=-1$. For ease of writing, we write {\it complex} instead of abstract simplicial complex in the rest of the article.  The maximum of the dimensions of all simplexes in $X$ is called the dimension of complex $X$, denoted by $\dim(X)$. That is,
\[
\dim(X):=\max\{\dim(S)\suchthat S \in X\}.
\]
Observe that if $\dim(X)=1$ then $X$ can be viewed as a graph.

 For $0\le j\le \dim(X)$, the set of all $j$-dimensional simplexes of $X$ is denoted by 
$$
X^j:=\{\sigma\in X\suchthat \dim(\sigma)=j\}.
$$ 
We say  $\sigma, \sigma'\in X^j$ are neighbour if $\sigma\cup \sigma'\in X^{j+1}$. Then we write  $\sigma\sim \sigma'$.  A similar notion was introduced in \cite{PR17}. 
We say the distance of $\sigma,\sigma'\in X^j$ is $k\in \N\cup\{0\}$ if $k$ is the least possible number such that there exist $\sigma_0,\sigma_1\ldots, \sigma_k\in X^j$ with $\sigma=\sigma_0$ and $\sigma_k=\sigma'$ such that $\sigma_i\sim \sigma_{i+1}$ for $0\le i\le k-1$. Then we write $\dist(\sigma, \sigma')=k$.
Define 
\[
X_{\sigma,k}:=\{\sigma'\in X^j\suchthat \dist(\sigma,\sigma')\le k\},
\]
the set of all $j$-simplexes which are within distance $k$ from $\sigma$. Clearly $\sigma\in X_{\sigma,k}$ for all $k\ge 0$. Note that  if $k=0$ or $\dim(\sigma)=\dim(X)$ then $X_{\sigma,k}=\{\sigma\}$, as there is no $\sigma'(\neq \sigma)\in X$ such that  $\sigma'\sim \sigma$. Thus $k=0$ and $\dim(\sigma)=\dim(X)$ are two trivial cases.

Let $k\ge 1$ and $j<\dim(X)$. The {\it $k$-neighbourhood} of $\sigma\in X^j$ is the $(j+1)$-dimensional sub-complex induced by $X_{\sigma,k}$, denoted by $N_{k,X}(\sigma)$. That is, 
\[
N_{k,X}(\sigma):=\{\tau\in X\suchthat \tau \subseteq \sigma'\cup \sigma'' \mbox{ for some } \sigma',\sigma''\in X_{\sigma,k}\}.
\]
In particular, if $\dim(X)=1$ and $v\in X_0$ then $N_{1,X}(v)$ refers to the sub-graph induced by $v$ and its neighbours $\{w\in X_0\suchthat v\sim w\}$.

We say two complexes $X$ and $Y$ (on $X_0$ and $Y_0$ respectively) are {\it  isomorphic} (denoted by $X\simeq Y$) if there exists a bijective function $f: X_0 \to Y_0$ such that 
\[
\{\sigma^0,\sigma^1,\ldots,\sigma^k\}\in X \iff \{f(\sigma^0),\ldots, f(\sigma^k)\}\in Y, \mbox{ for $0\le k\le \dim(X)$}. 
\]
It is clear that if $X\simeq Y$ then $|X_0|=|Y_0|$ and $\dim(X)=\dim(Y)$.   If $\dim(X)=1$ then  $X\simeq Y$  means the  two graphs $X$ and $Y$ are isomorphic.

We say two complexes $X$ and $\widetilde{X}$ on $X_0$ have same $k$-neighbourhoods  if  
\begin{align}\label{eqn:k-neighbourh}
N_{k,X}(\sigma)\simeq N_{k  ,\widetilde{X}}(\sigma) \mbox{ for all $\sigma\in X$,}
\end{align}
that is, the $k$-neighbourhoods of all simplexes in both complexes are isomorphic. In this case we write  $X\simeq_k \widetilde{X}$. Observe that if $\dim(\sigma)=\dim(X)$ then \eqref{eqn:k-neighbourh} holds trivially.  The definition of $X\simeq_k \widetilde{X}$ implies that $\sigma\in X$ if and only if $\sigma\in Y$. In particular, if $\dim(X)=1$ then $X\simeq_k \widetilde{X}$ implies that the $k$-neighbourhoods of $v\in X_0$ in $X$ and $\widetilde{X}$ are isomorphic as graphs.

 A complex $X$ on $X_0$  is said to be {\it reconstructable} (up to isomorphism) from its $k$-neighbourhoods if $X \simeq_k \widetilde{X}$ implies $X \simeq \widetilde{X}$, for all complexes  $\widetilde{X}$ on $X_0$. Further, we say  $X$ is {\it exactly reconstructable} if  ${X} \simeq \widetilde{X}$ implies $X=\widetilde{X}$. We study whether the  Linial-Meshulam model is reconstructable from its $1$-neighbourhoods.  The  Linial-Meshulam model is a random complex of the form \eqref{eqn:complex}.

In the rest of the article, for $d\in \N$, the complex will be of the form 
\begin{equation}\label{eqn:complex}
	X:=\{\emptyset, X_0,X_1,\ldots,X_{d-1},X^d\}:=\l(\bigcup_{k=-1}^{d-1}X_k\r)\cup X^d,
\end{equation}
where $X_{-1}:=\emptyset$, $X_0 := \{1,2,\ldots,n\}$, 
$X_k :=\{\{i_0,\ldots,i_k\}: 1\leqslant i_0 < \cdots < i_k \leqslant n\}$, for $1\le k\le d$, and   $X^d\subseteq X_d$.  Note that  $X_k$ denotes the set of all $k$-dimensional simplexes on $X_0$. In this model, the complex contains all the simplexes up to  the  dimension $(d-1)$ and a few $d$-dimensional simplexes.

Note that if two complexes $X$ and $\widetilde{X}$ on $X_0$ are of the form \eqref{eqn:complex} then 
\[
N_{k,X}(\sigma)=N_{k,\widetilde{X}}(\sigma),
\; \mbox{ whenever $\dim(\sigma)\le d-2$}.
\]
The neighbourhoods can differ only if $\dim(\sigma)=d-1$. 
Thus, in this case, the collection of $k$-neighbourhoods of  $X$ will be denoted by
\begin{align}\label{eqn:kneighbour}
	\mathcal N_{k}(X):=\{N_{k,X}(\sigma)\suchthat \sigma\in X_{d-1}\}.	
\end{align}
We say  a complex $X$ of the form \eqref{eqn:complex} is reconstructable from its $k$-neighbourhoods  if, for all  $\widetilde{X}$ of the form \eqref{eqn:complex},
\[
X\simeq \widetilde{X} \mbox{ whenever } N_{k,X}(\sigma)\simeq N_{k,\widetilde{X}}(\sigma) \mbox{ for all } \sigma\in X_{d-1}.
\]
Similarly, we say  $X$ is exactly reconstructable from its $k$-neighbourhoods  if 
\(
X= \widetilde{X} \mbox{ whenever } N_{k,X}(\sigma)\simeq N_{k,\widetilde{X}}(\sigma) \mbox{ for all } \sigma\in X_{d-1} .
\)


The {\it degree} of a simplex $\sigma\in X_{d-1}$ is denoted by
$$
\deg(\sigma)=\deg_X(\sigma):= \sum_{\tau \in X^d}\one_{\{\sigma\subset \tau\}},
$$
the number of $d$-dimensional simplexes containing $\sigma$.
Observe that a $\tau\in X^d$ will contribute non-zero value in the last equation if $\tau=\sigma\cup\{v\}$ for some $v\in X_0\backslash \sigma$.   The set of neighbours of $\sigma\in X_{d-1}$ is denoted by $S_\sigma$, that is,
\[
S_\sigma=:\{\sigma'\in X_{d-1}\suchthat \sigma'\sim \sigma\}.
\]
Note that the number of elements in $S_\sigma$ is $d$ times $deg(\sigma)$, that is,
\[
|S_\sigma|=d\deg(\sigma).
\]
For any finite set $A$, the notation $|A|$ will denote the number of elements in $A$. For an example see Figure \ref{fig:degree}.
\begin{figure}[h]
	\includegraphics[scale=0.1]{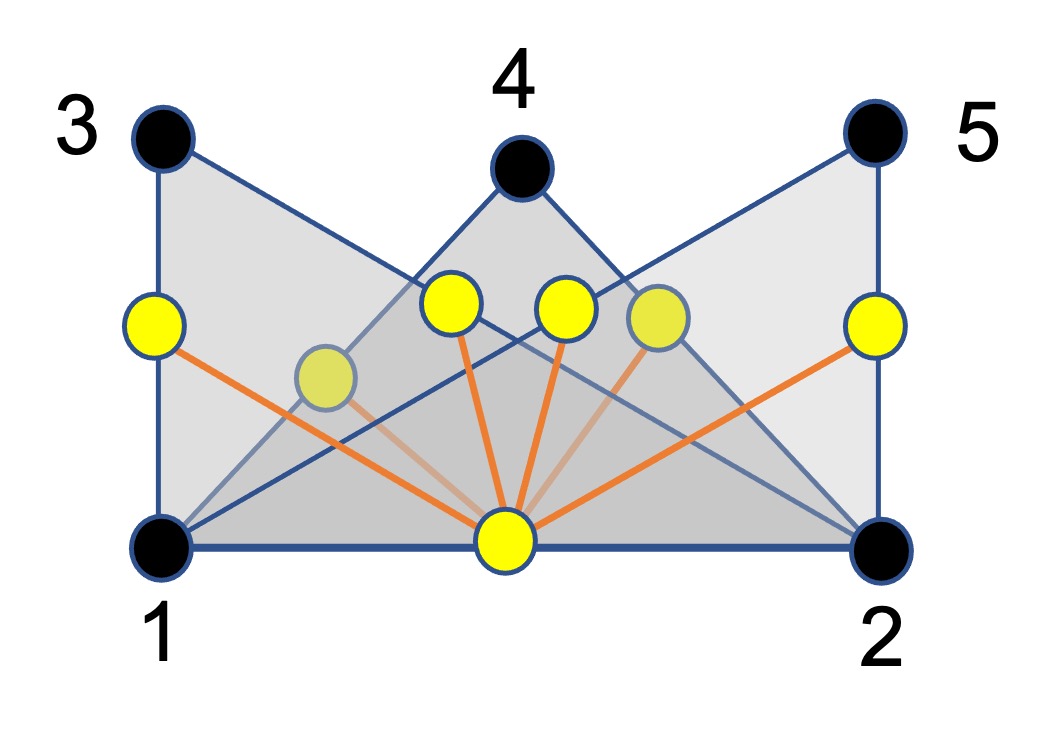}
	\caption{ Here $d=2$, $\deg_X(1,2)=3$ and $|S_\sigma|=6$}\label{fig:degree}
\end{figure}

Next we recall the Linial-Meshulam model, which is a random simplicial complex.  Let $X_{d,p_n} \left(\subseteq X_d\right)$ denote  the collection of random $d$-simplexes, where each  $\sigma\in X_d$ is chosen independently with probability $p_n$. Define a random simplicial complex 
$$Y_d(n,p_n) := \{\emptyset, X_0,\ldots,X_{d-1},X_{d,p_n}\},$$
which is known as the {\it Linial-Meshulam model}.  Observe that $Y_d(n,p_n)$ contains all the simplexes up to  the  dimension $(d-1)$, whereas each $d$-dimensional simplex  is included in the complex with probability $p_n$ and independently. 

It is easy to see that the degree of $\sigma\in X_{d-1} $ in $Y_d(n,p_n)$ is a Binomial random variable with parameters $(n-d, p_n)$, that is, $\deg_{Y_d(n,p_n)}(\sigma)\sim Bin(n-d, p_n)$. The use of the notation \enquote{$\sim$} will always be clear form the context as the same is also used for two neighbouring simplexes.

	\section{Main Results}\label{sec.mainres}
	In this section we state our main results, and give the key idea of the proofs. Let us define the high probability events.
We say	a sequence of events $A_n$ occurs \emph{with high probability} if
		\[
		P(A_n^c)=o\left( \frac{1}{n^s}\right)\,,
		\]
		for some $s>0$. We write $a_n = o(b_n)$ for two sequence of numbers $\{a_n\}_{n=1}^{\infty}$ and $\{b_n\}_{n=1}^{\infty}$ if $\left|a_n/b_n\right|\to 0$ as $n \to \infty$.

	 In \cite{gaudio2020shotgun}, it was shown that the Er\H os-R\' enyi graph $\GG(n,p_n)=Y_1(n,p_n)$ with $p_n=n^{-\alpha}$ can be exactly reconstructed from its $1$-neighbourhoods with high probability when $0< \alpha <1/3$. We extended this result for $d\in \N$.
	\begin{theorem} \label{main.thm.1}
		The Linial-Meshulam  model $Y_d(n,p_n)$ where $p_n=n^{-\alpha}$ for $0< \alpha< 1/3$, is  exactly reconstructable from its $1$-neighbourhoods with high probability. 
	\end{theorem}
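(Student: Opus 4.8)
The plan is to mimic the Gaudio–Mossel argument for $\mathcal G(n,p_n)$, working one dimension higher: the relevant objects are now the $(d-1)$-simplices $\sigma \in X_{d-1}$, which play the role that vertices played in the graph case, and the $1$-neighbourhood $N_{1,X}(\sigma)$ encodes the degree $\deg(\sigma)$ together with, for each neighbour $\sigma'\in S_\sigma$, the degree $\deg(\sigma')$ and the adjacency pattern among the neighbours. I would first record the elementary degree statistics: for $p_n = n^{-\alpha}$ with $0<\alpha<1/3$, $\deg_{Y_d(n,p_n)}(\sigma)\sim \mathrm{Bin}(n-d,p_n)$ concentrates around $n^{1-\alpha}\to\infty$, so by a Chernoff bound all $(d-1)$-simplices have degree in $(1\pm o(1))\, n^{1-\alpha}$ with high probability (there are only $\binom n d = O(n^d)$ of them, so a union bound is affordable given the $o(n^{-s})$ tail). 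This is the base-case input and is routine.

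The core is a \emph{sequential reconstruction} argument. Fix a $d$-simplex $\tau_0 = \{v_0,\dots,v_d\}\in X^d$; its $d+1$ facets are mutually adjacent $(d-1)$-simplices, and one checks that with high probability the "double star" around an edge $\sigma\sim\sigma'$ — i.e. the two degrees and the set of common neighbours — is rigid enough to be identified inside a single $1$-neighbourhood. Concretely I would show: (i) with high probability no two $(d-1)$-simplices $\sigma\ne\sigma'$ have the same multiset of neighbour-degrees (an anticoncentration/second-moment estimate on $\mathrm{Bin}(n-d,p_n)$ differences, exactly as in the graph case, using $\alpha<1/3$ to beat the $O(n^{2d})$ pairs); hence the labels of neighbours can be matched across two overlapping $1$-neighbourhoods. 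Then (ii), starting from $\tau_0$ and its facets, I would grow the reconstructed complex: given that the star of $\sigma$ has been correctly placed, each neighbour $\sigma'\in S_\sigma$ appears in $N_{1,X}(\sigma')$ with its own star, and the overlap $N_{1,X}(\sigma)\cap N_{1,X}(\sigma')$ — which is determined by the common neighbours $S_\sigma\cap S_{\sigma'}$ together with the $d$-simplices through $\sigma\cup\sigma'$ — lets us consistently glue $N_{1,X}(\sigma')$ on. Since the "neighbour graph" on $X_{d-1}$ is connected with high probability (it is essentially the $1$-skeleton dual of a dense-enough random complex; connectivity follows from the same degree lower bound plus a standard Erdős–Rényi-type connectivity estimate), this propagates to the whole complex, yielding a reconstruction that is \emph{exact} — because the vertex set $X_0$ and all lower skeleta are fixed by \eqref{eqn:complex}, matching the $d$-simplices is matching the whole complex, and the rigidity in (i) forces the identification to be the identity rather than merely an isomorphism.

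The technical heart — and the step I expect to be the main obstacle — is the anticoncentration/uniqueness statement (i) together with checking that a single $1$-neighbourhood really does pin down the local gluing data. In the graph case one uses that the degree sequence of the neighbours of a vertex, as an unordered tuple, almost surely distinguishes vertices when $\alpha<1/3$; here one must be careful that the extra combinatorial structure (a neighbour $\sigma'$ of $\sigma$ shares exactly a $(d-1)$-subset, and two neighbours $\sigma',\sigma''$ of $\sigma$ may or may not themselves be adjacent depending on whether $\sigma'\cup\sigma''$ spans a face, which in turn depends on whether certain $d$-simplices are present) does not create coincidences, and that the number of potential "bad" configurations, which grows like a fixed power of $n$ depending on $d$, is still killed by the probability bound when $\alpha<1/3$. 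I would isolate this as a lemma: with high probability, for every $\sigma\in X_{d-1}$ the isomorphism type of $N_{1,X}(\sigma)$, \emph{as a labelled object via the degree statistics of its constituent simplices}, determines $\deg(\sigma)$, the set $S_\sigma$ up to the degree-labelling, and the incidence of $d$-simplices on $\sigma$; everything else is bookkeeping and union bounds. The exponent $1/3$ enters exactly here, through a $p_n^{?}\cdot n^{?}\to 0$ balance of the same shape as in \cite{gaudio2020shotgun}, and this is also the reason the authors only conjecture (Conjecture \ref{conj}) the optimal threshold $1/2$.
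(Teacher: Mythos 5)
Your proposal diverges from the paper's route --- the paper uses the Gaudio--Mossel ``fingerprint'' strategy: Lemma \ref{lemma.gm.1} reduces exact reconstruction to showing that, with high probability, the induced subcomplexes $H_{\sigma_1,\sigma_2}$ on common neighbours of adjacent pairs are pairwise non-isomorphic, and this is proved by an embedding-counting argument balancing roughly $n^{O(W_{\sigma_1,\sigma_2})}$ many embeddings against a realization probability $p_n^{\Theta(n^{2-5\alpha})}$ --- and your divergence introduces a genuine gap. First, the data you propose to use is not available: $N_{1,X}(\sigma)$ is the induced sub-complex on $X_{\sigma,1}=\{\sigma\}\cup S_\sigma$, so for a neighbour $\sigma'\in S_\sigma$ it reveals only the trace of $S_{\sigma'}$ on $S_\sigma$ (the codegree $|S_\sigma\cap S_{\sigma'}|$ and the adjacencies inside $S_\sigma$), not $\deg(\sigma')$. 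Second, even with the correct interpretation, the ``multiset of neighbour-degrees'' is quantitatively too weak for your step (i): inside a single $1$-neighbourhood there are $\Theta(n^{1-\alpha})$ neighbours whose codegrees with $\sigma$ concentrate around $n^{1-2\alpha}$ in a window of width $O\left(n^{(1-2\alpha)/2}\sqrt{\log n}\right)$, so for every $\alpha>0$ the pigeonhole principle forces massive collisions and codegree data cannot ``match the labels of neighbours across two overlapping $1$-neighbourhoods''. This is precisely why the paper's fingerprint is the full isomorphism type of $H_{\sigma_1,\sigma_2}$, whose $\Theta(n^{2-5\alpha})$ internal $d$-simplices supply enough entropy, and why $1/3$ emerges from the inequality $2-5\alpha>1+c-2\alpha$ with $c>\max\{0,2\alpha-1\}$ rather than from a birthday bound on degrees; the quantitative step you defer (``the exponent $1/3$ enters exactly here'') is the entire content of the theorem and is not supplied.

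A secondary issue: your sequential gluing along a connected ``neighbour graph'' needs every local identification to be forced and the exactness of the reconstruction (identity on $X_0$, not merely an isomorphism) to survive propagation from the seed; neither is established. The paper avoids both problems because Lemma \ref{lemma.gm.1} decides each adjacency $\sigma_1\sim\sigma_2$ independently, by checking whether some fingerprint computed from $N_{1,X}(\sigma_1)$ is isomorphic to some fingerprint computed from $N_{1,X}(\sigma_2)$, with no seed, no ordering of steps, and no error propagation.
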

 The idea of the proof of Theorem \ref{main.thm.1} is similar to the proof of  \cite[Theorem 2]{gaudio2020shotgun} (that is, the Erd\H os-R\'enyi graph $\mathcal G(n,p_n)\equiv Y_1(n,p_n)$ is reconstructable for $0<\alpha < 1/3$), but the details require some more carefulness.  We don't think the range $0<\alpha<1/3$ is optimal for the reconstruction of $Y_d(n,p_n)$. We have following conjecture.
	\begin{conjecture}\label{conj}
		The	 Linial-Meshulam model $Y_d(n,p_n)$ with  $p_n=n^{-\alpha}$ is  exactly reconstructable from its $1$-neighbourhoods with high probability if $0<\alpha <1/2$. 
	\end{conjecture}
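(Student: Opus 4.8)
The plan is to lift the Huang--Tikhomirov reconstruction scheme \cite{huang2021shotgun}, which attains the full range $0<\alpha<1/2$ for $\GG(n,p_n)=Y_1(n,p_n)$, to the simplex-adjacency structure of $Y_d(n,p_n)$: the role of vertices is played by $(d-1)$-simplices and the role of edges by the neighbour relation $\sigma\sim\sigma'$. First I would record the structural estimates that drive everything. From $\deg(\sigma)\sim \mathrm{Bin}(n-d,p_n)$, a Chernoff bound and a union bound over the $\binom{n}{d}$ simplices give that with high probability every $\sigma\in X_{d-1}$ has degree $(1+o(1))\,n^{1-\alpha}$. More importantly, for an adjacent pair $\sigma\sim\sigma'$ sharing the ridge $\sigma\cap\sigma'$ (a $(d-2)$-simplex on $d-1$ vertices) and differing in a single vertex, the generic common neighbours are governed by the random set $U_{\sigma,\sigma'}:=\{u:\ \sigma\cup\{u\}\in X^d\ \text{and}\ \sigma'\cup\{u\}\in X^d\}$, of expected size $\Theta(n\,p_n^2)=\Theta(n^{1-2\alpha})$. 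This count diverges precisely when $\alpha<1/2$, and it is exactly this divergence that the reconstruction will exploit.

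Next I would analyse the internal structure of a single neighbourhood $N_{1,X}(\sigma)$, which is far richer than in the graph case. Writing $\sigma=\{a_1,\dots,a_d\}$ and $V_\sigma=\{v:\sigma\cup\{v\}\in X^d\}$, the neighbours of $\sigma$ are the simplices $\sigma'_{i,v}=(\sigma\setminus\{a_i\})\cup\{v\}$ with $1\le i\le d$ and $v\in V_\sigma$. A direct check of the rule $\tau\sim\tau'\iff\tau\cup\tau'\in X^d$ shows $N_{1,X}(\sigma)$ splits into (i) a \emph{deterministic} $d$-clique $\{\sigma'_{1,v},\dots,\sigma'_{d,v}\}$ for each $v\in V_\sigma$, arising from $\sigma'_{i,v}\cup\sigma'_{j,v}=\sigma\cup\{v\}$; and (ii) \emph{random} cross-edges $\sigma'_{i,v}\sim\sigma'_{i,w}$ present exactly when $(\sigma\setminus\{a_i\})\cup\{v,w\}\in X^d$, while no edge $\sigma'_{i,v}\sim\sigma'_{j,w}$ with $i\ne j$, $v\ne w$ ever occurs. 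The deterministic cliques carry no distinguishing information, so the entire scheme must be built on the random cross-edges; isolating this random part from the rigid skeleton is the first genuinely new step relative to \cite{huang2021shotgun}.

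With this in hand I would run the reconstruction in three movements. First, identify the centre of each neighbourhood (generically the unique simplex adjacent to all others, once degenerate profiles are excluded on a high-probability event). Second, match neighbourhoods across an adjacency $\sigma\sim\sigma'$ by locating, inside $N_{1,X}(\sigma)$, the common neighbours indexed by $U_{\sigma,\sigma'}$ and reading off their cross-edge pattern, which serves as the local signature of that adjacency. Third, glue iteratively, extending a correctly reconstructed region one simplex at a time. Correctness of the gluing reduces to a rigidity statement: with high probability, for every adjacent pair the induced incidence pattern of random cross-edges on $U_{\sigma,\sigma'}$, read together with the neighbours' degrees, determines the alignment uniquely, so that no two distinct adjacencies present the same signature.

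The hard part will be this rigidity lemma in the regime $1/3\le\alpha<1/2$, which is exactly where the coarser degree-profile statistic of \cite{gaudio2020shotgun} (the basis of Theorem \ref{main.thm.1}) ceases to concentrate. Following \cite{huang2021shotgun}, I would not ask the signature to concentrate but would instead bound the probability that two prescribed adjacencies are confusable and then control the total count by a union bound. The difficulty is twofold. The union now ranges over pairs of adjacencies, of which there are $O\!\left(n^{2(d+1-\alpha)}\right)$ rather than $O(n^4)$, so the per-pair collision probability must be driven below an inverse polynomial of far larger degree; and the set $U_{\sigma,\sigma'}$ together with its cross-edges is itself a $d$-dimensional random object whose features are governed by overlapping families of $d$-simplices, creating dependencies absent in the graph case. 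I expect the $\Theta(n^{1-2\alpha})$ asymptotically independent cross-edge features to render collisions super-polynomially rare and hence to beat the enlarged union bound for every fixed $d$ whenever $\alpha<1/2$; but carrying out the dependency bookkeeping rigorously, rather than invoking the easier $\alpha<1/3$ concentration already available, is where essentially all the work lies.
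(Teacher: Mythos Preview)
The statement you are attempting to prove is \emph{Conjecture~\ref{conj}} in the paper, and the paper does not prove it. Immediately after stating the conjecture the authors write that one might try to establish it ``using the method that used in \cite{huang2021shotgun}'' and that this ``remain[s] for future works.'' There is therefore no proof in the paper to compare your proposal against.

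Your plan is precisely the route the authors suggest but do not pursue: transport the Huang--Tikhomirov scheme from vertices/edges to $(d-1)$-simplices and the relation $\sigma\sim\sigma'$. The structural observations you record (the deterministic $d$-cliques inside $N_{1,X}(\sigma)$, the random cross-edges within each ``colour'' $i$, the common-neighbour set $U_{\sigma,\sigma'}$ of expected size $\Theta(n^{1-2\alpha})$) are correct and are the natural analogues of the graph quantities. However, your write-up is explicitly a programme rather than a proof: you yourself flag that the rigidity lemma in the window $1/3\le\alpha<1/2$---bounding the probability that two adjacencies present the same cross-edge signature well enough to defeat a union bound over $O(n^{2(d+1-\alpha)})$ pairs, while handling the dependencies induced by overlapping $d$-simplices---``is where essentially all the work lies.'' Until that lemma is actually stated and proved, what you have is a credible outline consistent with the paper's own speculation, not a proof of the conjecture.
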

	
	One can try to prove Conjecture \ref{conj} using the method that used in \cite{huang2021shotgun}. An other direction of work would be considering the reconstruction problem from its $2$-neighbourhoods using the method used in \cite{gaudio2020shotgun}. These remain for future works.

	  The next result is about non-constructibility of $Y_d(n,p_n)$. For $d=1$, the graph $\GG(n,p_n)\equiv Y_1(n,p_n)$ is non-reconstructible from its $1$-neighbourhoods with high probability when $1/2<\alpha<1$. We show that the same result holds for all $d\ge 1$.
	\begin{theorem}\label{main.thm.2}
	    The Linial-Meshulam graph $Y_d(n,p_n)$ where $p_n=n^{-\alpha}$ for $1/2< \alpha< 1$, cannot be reconstructed from its $1$-neighbourhoods with high probability. 
	\end{theorem}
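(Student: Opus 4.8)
The plan is to show that for $1/2 < \alpha < 1$ there is, with high probability, a pair of distinct $(d-1)$-simplices $\sigma_1, \sigma_2$ that are "swappable" in the sense that exchanging the role of these two simplices in the list of $d$-faces yields a complex $\widetilde{X} \neq X$ with $N_{1,X}(\sigma) \simeq N_{1,\widetilde X}(\sigma)$ for every $\sigma \in X_{d-1}$. Concretely, I would look for two $(d-1)$-simplices $\sigma_1$ and $\sigma_2$ sharing all but one vertex, say $\sigma_1 = \tau \cup \{a\}$ and $\sigma_2 = \tau \cup \{b\}$ with $a \neq b$ and $|\tau| = d-1$, such that both $\sigma_1$ and $\sigma_2$ have degree exactly $1$, with their unique cofaces being $\sigma_1 \cup \{b\}$ and $\sigma_2 \cup \{a\}$ respectively — that is, the single $d$-simplex $\tau \cup \{a,b\}$ is the unique coface of \emph{both}. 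Wait: that is one simplex, so instead the right configuration is a pair of disjoint "pendant" $d$-simplices. Let me restate: find two vertices $a,b$ and two $(d-1)$-sets $\tau_1,\tau_2$ (disjoint from $\{a,b\}$ and from each other) so that $\deg(\tau_1\cup\{a\}) = 1$ with coface $\tau_1\cup\{a,x\}$, and symmetrically for $\tau_2\cup\{b\}$, arranged so that relabelling $a \leftrightarrow b$ produces an automorphism of all local neighbourhoods but not of $X$ globally. The cleanest such gadget — the one used in the $d=1$ case of \cite{gaudio2020shotgun} — is two isolated edges (here: two isolated $d$-simplices, i.e. $d$-simplices all of whose $(d-1)$-faces have degree $1$) sitting on disjoint vertex sets; then one may permute vertices between the two $d$-simplices to get a non-isomorphic-in-placement but $1$-neighbourhood-identical complex.

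First I would make the combinatorial reduction precise: identify a local gadget $G$ (a constant-size configuration of $d$-simplices on a constant number of vertices) with the property that if $X$ contains two vertex-disjoint copies of $G$ and no $d$-simplex meets either copy except those inside it, then $X$ is not reconstructable — because swapping a suitable vertex between the two copies changes $X$ while preserving every $1$-neighbourhood $N_{1,X}(\sigma)$, $\sigma \in X_{d-1}$. The natural choice: $G$ is a single $d$-simplex $\{v_0,\dots,v_d\}$ that is "isolated", meaning $\deg_X(\{v_0,\dots,\hat v_i,\dots,v_d\}) = 1$ for every $i$ (no other $d$-faces touch these vertices in a way that creates new $(d-1)$-neighbours). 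Given two such isolated $d$-simplices on disjoint vertex sets $\{v_0,\dots,v_d\}$ and $\{w_0,\dots,w_d\}$, the map swapping $v_0 \leftrightarrow w_0$ and fixing everything else sends $X$ to a complex $\widetilde X$ with the same family of $d$-simplices-up-to-isolated-pieces; one checks directly that $N_{1,X}(\sigma) \simeq N_{1,\widetilde X}(\sigma)$ for all $\sigma$ and $X \neq \widetilde X$, since the ambient complete $(d-1)$-skeleton means the isolated $d$-simplex on $\{v_0,\dots,v_d\}$ and the one on $\{v_1,\dots,v_d,w_0\}$ have isomorphic $1$-neighbourhoods.

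Second, I would carry out the second-moment argument showing such a disjoint pair of isolated $d$-simplices exists with high probability when $\alpha > 1/2$. Let $N$ be the number of isolated $d$-simplices in $Y_d(n,p_n)$. A $d$-simplex $\tau$ is present with probability $p_n = n^{-\alpha}$, and "isolated" requires that for each of its $d+1$ facets, no other of the $\sim n$ candidate cofaces is present, which happens with probability $(1-p_n)^{(d+1)(n-d-1)} = e^{-(d+1)n^{1-\alpha}(1+o(1))} \to 1$ since $1-\alpha < 1/2 < 1$, in fact $n^{1-\alpha}\to 0$ is false — here $1-\alpha \in (0,1/2)$ so $n^{1-\alpha}\to\infty$; so this probability is $e^{-\Theta(n^{1-\alpha})}$, which is a \emph{stretched-exponential} decay, not constant. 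Hence $\E[N] \asymp \binom{n}{d+1} p_n \, e^{-\Theta(n^{1-\alpha})}$, and for this to blow up I actually need the exponent $\tfrac{d+1}{d+1}\log n \cdot (\#\text{facets}) $ ... this forces a more careful count: I should instead count \emph{pendant} configurations (a $d$-simplex exactly one of whose facets has degree $1$, the rest unrestricted), or better, directly count pairs $\sigma_1 \sim$-related structures as in \cite{gaudio2020shotgun}'s Theorem 3, where the relevant event is the existence of two edges $\{a,c\}, \{b,c\}$ with $\deg(a)=\deg(b) = 1$; the $d$-dimensional analogue is two $d$-simplices $\tau\cup\{a\}, \tau\cup\{b\}$ sharing the facet-complement $\tau$ (size $d$), with $\deg_X(\text{the } (d-1)\text{-face } \tau\cup\{a\}\setminus\{*\}) $ all equal to $1$ except at $\tau$. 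I would set up $N$ = number of such pairs, show $\E[N] \to \infty$ and $\Var(N) = o(\E[N]^2)$ via the standard computation (the dominant contribution to the second moment comes from disjoint pairs, and the "overlapping" terms are lower order precisely because $\alpha > 1/2$ makes collisions rare), then apply Chebyshev and a union bound to upgrade "$N \geq 1$" to "with high probability", and finally invoke the combinatorial reduction of the previous paragraph.

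The main obstacle I anticipate is the bookkeeping in the variance computation: one must enumerate the ways two candidate swappable configurations can share vertices or $d$-simplices, and verify in each case that the joint probability is small enough — this is where the hypothesis $\alpha > 1/2$ is used essentially (for $\alpha \le 1/2$ the isolated/pendant structures are too rare or the correlations too strong, matching the expectation in Conjecture \ref{conj} that the true threshold is $1/2$). A secondary but genuinely delicate point is verifying the isomorphism $N_{1,X}(\sigma) \simeq N_{1,\widetilde X}(\sigma)$ for the finitely many $(d-1)$-simplices $\sigma$ whose $1$-neighbourhood is actually touched by the swap: because the $1$-neighbourhood of a $(d-1)$-simplex $\sigma$ records the full induced $d$-dimensional subcomplex on $\sigma$ together with its neighbours $S_\sigma$, and in the isolated case $S_\sigma$ has size exactly $d$, the relevant neighbourhoods are all isomorphic to the same fixed small complex, so the check is routine but must be written out for general $d$ rather than quoted from the $d=1$ literature.
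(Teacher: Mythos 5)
Your strategy --- exhibit a ``swappable'' local gadget via a second moment argument --- does not work in the regime $1/2<\alpha<1$, and this is a fatal gap rather than a bookkeeping issue. The degree of any $(d-1)$-simplex is $Bin(n-d,n^{-\alpha})$ with mean $n^{1-\alpha}\to\infty$, and by Chernoff plus a union bound over the $\binom{n}{d}$ facets, \emph{every} $(d-1)$-simplex has degree $\Theta(n^{1-\alpha})$ with high probability. Consequently there are no degree-one facets, no isolated $d$-simplices, and no pendant configurations at all: the probability that a given facet has degree $1$ is of order $n^{1-\alpha}e^{-n^{1-\alpha}}$, so the expected number of any of your proposed gadgets is at most $\mathrm{poly}(n)\cdot e^{-\Theta(n^{1-\alpha})}\to 0$. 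You notice this stretched-exponential factor midway through and pivot to ``pendant'' pairs $\tau\cup\{a\},\tau\cup\{b\}$ with certain facets of degree $1$, but those events carry the same $e^{-\Theta(n^{1-\alpha})}$ penalty, so $\E[N]\to 0$ there as well; no second moment computation can then give $N\ge 1$ with high probability. A separate, independent problem: even granting two vertex-disjoint isolated $d$-simplices, swapping $v_0\leftrightarrow w_0$ produces a complex that is the image of $X$ under the transposition $(v_0\,w_0)$ (since $v_0,w_0$ lie in no other $d$-face), hence \emph{isomorphic} to $X$. Since reconstructability here is defined up to isomorphism, such a swap does not witness non-reconstructability.

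The paper takes an entirely different, information-theoretic route, which is what the regime $\alpha>1/2$ actually calls for: because $np_n^2=n^{1-2\alpha}\to 0$, each $1$-neighbourhood is with high probability little more than a star recording $\deg(\sigma)$, so the whole collection $\mathcal N_1(X)$ carries too few bits to determine $X$. Concretely, the paper defines a typical set $\mathcal S$ of neighbourhood collections (Lemma \ref{lem.thm2.1}), shows $|X_{d,p_n}|$ concentrates on a set $I$ (Lemma \ref{lem.thm2.2}), and proves that $n^d!\,|\mathcal S|$ is negligible compared with $\binom{\binom{n}{d+1}}{m}$ for $m\in I$ (Lemma \ref{lem.thm2.3}); since conditionally on $|X_{d,p_n}|=m$ all complexes are equally likely, any reconstruction rule fails with probability $1-o(1)$. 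If you want a constructive (surgery-based) proof in this regime you would need a degree-preserving switching of $d$-faces together with an argument that the switched complex is genuinely non-isomorphic to the original --- a considerably more delicate program than the isolated-gadget swap, and not the one your proposal sets up.
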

	
\noindent Again the idea of the proof of this result is similar to the proof of  \cite[Theorem 3]{gaudio2020shotgun}, but  the calculations are more complicated.

\section{Proof of Theorem \ref{main.thm.1} }\label{sec.pfmain1}
In this section we prove Theorem \ref{main.thm.1}. The following lemmas will be used in the proof.  Throughout we use $p_n=n^{-\alpha}$, where $0<\alpha<1$.

We first state a generalization of the fingerprint lemma \cite[Lemma 2]{gaudio2020shotgun}. Let $\sigma_1, \sigma_2 \in X_{d-1}$. We say there is an edge between $\sigma_1$ and $\sigma_2$ in $X$, denoted by $(\sigma_1,\sigma_2)$, if $\sigma_1\sim \sigma_2$ in $X$. For $\sigma_1\sim \sigma_2$,  $H_{\sigma_1,\sigma_2}(X)$ denotes the sub-complex induced by  the simplexes of $S_{\sigma_1} \cap S_{\sigma_2}$, that is,
\[
H_{\sigma_1,\sigma_2}=H_{\sigma_1,\sigma_2}(X):=\{\tau\in X\suchthat \tau\subseteq \sigma\cup \sigma' \mbox{ for some $\sigma,\sigma'\in S_{\sigma_1} \cap S_{\sigma_2}$}\}.
\] 
Two edges $(\sigma_1,\sigma_2)$ and $(\sigma_3,\sigma_4)$ are said to be equal, denoted by $(\sigma_1,\sigma_2)=(\sigma_3,\sigma_4)$, if either $\sigma_1 =\sigma_3$, $\sigma_2 = \sigma_4$ or $\sigma_1 =\sigma_4$, $\sigma_2 = \sigma_3$. It is clear  that if $(\sigma_1,\sigma_2)=(\sigma_3,\sigma_4)$ then $H_{\sigma_1,\sigma_2}\simeq H_{\sigma_3,\sigma_4}$. If $\dim(X)=1$ and $v_1,v_2\in X_0$ such that $v_1\sim v_2$ then $H_{v_1,v_2}$ is the subgraph induced by the common neighbours of $v_1$ and $v_2$.
 
\begin{lemma}[Fingerprint Lemma]\label{lemma.gm.1}
	Let $X$ be a complex of the form $\{\emptyset, X_0,\ldots,X_{d-1},X^d\}$ where $X^d\subseteq X_d$.	If two edges $(\sigma_1,\sigma_2)$ and $(\sigma_3,\sigma_4)$ are equal  whenever $H_{\sigma_1,\sigma_2}$ and $H_{\sigma_3,\sigma_4}$ are  isomorphic then $X$ can be exactly reconstructed from the collection of its $1$-neighbourhoods.
\end{lemma}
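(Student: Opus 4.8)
The plan is to recover $X$ explicitly from the family $\big(N_{1,X}(\sigma)\big)_{\sigma\in X_{d-1}}$; since any $\widetilde X$ of the form \eqref{eqn:complex} with the same family produces the same output, this will give exact reconstruction. All complexes of the form \eqref{eqn:complex} share the complete $(d-1)$-skeleton, so recovering $X$ is the same as recovering $X^d$, and since $\tau\in X^d$ precisely when $\tau=\sigma_1\cup\sigma_2$ for some $\sigma_1\ne\sigma_2$ in $X_{d-1}$ with $\sigma_1\sim\sigma_2$, it is the same as recovering the adjacency relation $\sim$ on $X_{d-1}$. Writing $[K]$ for the isomorphism class of a complex $K$, I would attach to each $\sigma\in X_{d-1}$ the multiset of ``edge fingerprints''
\[
\mathcal F_\sigma \;:=\; \big\{\!\big\{\, [H_{\sigma,\sigma'}(X)] \suchthat \sigma'\in S_\sigma \,\big\}\!\big\},
\]
and carry out two steps: first, that $\mathcal F_\sigma$ is determined by the isomorphism type of $N_{1,X}(\sigma)$; second, that the whole family $(\mathcal F_\sigma)_{\sigma}$ determines $\sim$ once the hypothesis of the lemma is invoked.

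For the first step, recall that $N_{1,X}(\sigma)$ is the sub-complex of $X$ induced by $\{\sigma\}\cup S_\sigma$. I would check that two $(d-1)$-simplices $\rho,\rho'\in S_\sigma$ are adjacent inside $N_{1,X}(\sigma)$ exactly when $\rho\cup\rho'\in X^d$, and that $N_{1,X}(\sigma)$ contains a simplex on a prescribed vertex set iff $X$ does and that set fits inside the union of two members of $\{\sigma\}\cup S_\sigma$. From these two facts it follows that, for each $\sigma'\in S_\sigma$, the $(d-1)$-simplices of $N_{1,X}(\sigma)$ adjacent (inside $N_{1,X}(\sigma)$) to both $\sigma$ and $\sigma'$ are precisely the members of $S_\sigma\cap S_{\sigma'}$, and that the sub-complex they span inside $N_{1,X}(\sigma)$ equals $H_{\sigma,\sigma'}(X)$ --- all of $H_{\sigma,\sigma'}(X)$ lies in $N_{1,X}(\sigma)$, since its simplices lie in unions of members of $S_\sigma$, and nothing extraneous is added. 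Hence, once the centre $\sigma$ is identified inside the abstract complex $N_{1,X}(\sigma)$, the multiset $\mathcal F_\sigma$ is read off as an isomorphism invariant. Identifying the centre is the delicate point: it is automatic if $N_{1,X}(\sigma)$ is regarded as carrying its centre $\sigma$ as a marked face; otherwise I would recover it directly from $N_{1,X}(\sigma)$ (for instance, $\sigma$ is contained in exactly $\deg_X(\sigma)$ of the $d$-simplices of $N_{1,X}(\sigma)$, and $\deg_X(\sigma)$ equals the number of vertices of $N_{1,X}(\sigma)$ minus $d$).

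For the second step, I would prove that for $\sigma_1\ne\sigma_2$ in $X_{d-1}$ one has $\sigma_1\sim\sigma_2$ if and only if $\mathcal F_{\sigma_1}$ and $\mathcal F_{\sigma_2}$ have a common element. One direction is immediate: if $\sigma_1\sim\sigma_2$ then $\sigma_2\in S_{\sigma_1}$, $\sigma_1\in S_{\sigma_2}$, and $H_{\sigma_1,\sigma_2}(X)=H_{\sigma_2,\sigma_1}(X)$, so its class lies in both multisets. For the other direction, if $[H]\in\mathcal F_{\sigma_1}\cap\mathcal F_{\sigma_2}$, pick $\rho_1\in S_{\sigma_1}$ and $\rho_2\in S_{\sigma_2}$ with $H_{\sigma_1,\rho_1}(X)\simeq H\simeq H_{\sigma_2,\rho_2}(X)$; both $(\sigma_1,\rho_1)$ and $(\sigma_2,\rho_2)$ are edges of $X$, so by the hypothesis $(\sigma_1,\rho_1)=(\sigma_2,\rho_2)$, i.e. $\{\sigma_1,\rho_1\}=\{\sigma_2,\rho_2\}$; since $\sigma_1\ne\sigma_2$ this forces $\rho_1=\sigma_2$, whence $\sigma_1\sim\sigma_2$ (no case, not even an empty fingerprint, needs separate treatment). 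Therefore $X^d=\{\sigma_1\cup\sigma_2 \suchthat \sigma_1,\sigma_2\in X_{d-1},\ \sigma_1\ne\sigma_2,\ \mathcal F_{\sigma_1}\cap\mathcal F_{\sigma_2}\ne\emptyset\}$ is determined by $(\mathcal F_\sigma)_\sigma$, hence by $(N_{1,X}(\sigma))_\sigma$; together with the fixed $(d-1)$-skeleton this pins down $X$, so any $\widetilde X$ of the form \eqref{eqn:complex} with $N_{1,\widetilde X}(\sigma)\simeq N_{1,X}(\sigma)$ for every $\sigma$ must equal $X$.

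The hard part will be the first step --- extracting the fingerprint data faithfully from the abstract neighbourhood: identifying the centre $\sigma$ inside $N_{1,X}(\sigma)$, and verifying that the common-neighbour sub-complex computed there is exactly $H_{\sigma,\sigma'}(X)$ with no spurious or missing simplices (the structure of the $d$-simplices of $N_{1,X}(\sigma)$, each of which contains $d-1$ or $d$ vertices of $\sigma$, must be unpacked for this). The second step is then a short combinatorial argument powered entirely by the injectivity hypothesis, exactly as in the graph case \cite[Lemma 2]{gaudio2020shotgun}.
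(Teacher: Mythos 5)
Your proposal is correct and follows essentially the same route as the paper: declare $\sigma_1\sim\sigma_2$ precisely when some $\rho_1\in S_{\sigma_1}$ and $\rho_2\in S_{\sigma_2}$ have isomorphic fingerprints $H_{\sigma_1,\rho_1}\simeq H_{\sigma_2,\rho_2}$, and then use the injectivity hypothesis to force $\{\sigma_1,\rho_1\}=\{\sigma_2,\rho_2\}$, hence $\rho_1=\sigma_2$. The only substantive difference is that you explicitly flag (and sketch) the extraction of $H_{\sigma,\sigma'}$ and of the centre $\sigma$ from the abstract neighbourhood $N_{1,X}(\sigma)$, a point the paper's proof passes over silently.
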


In the next lemma, we give an upper bound (with high probability) on the number of simplexes that are connected with both $\sigma_1,\sigma_2\in X_{d-1}.$
\begin{lemma}\label{lem.recon.1}
	Let $\sigma_1,\sigma_2\in X_{d-1}$ such that $\sigma_1\cup \sigma_2\in X_{d,p_n}$, that is, $\sigma_1\sim \sigma_2$. The number of  simplexes that are neighbours of  $\sigma_1$ and $\sigma_2$ is denoted by $W_{\sigma_1,\sigma_2}$, that is,
	\[
	W_{\sigma_1,\sigma_2}=:|\{\sigma\in X_{d-1} \suchthat \sigma\sim \sigma_1, \sigma \sim\sigma_2 \}|.
	\]
	Then there exists a positive constant $C$ such that 
	\begin{align}\label{eq.1.0.5}
		P(W_{\sigma_1,\sigma_2} \geqslant d-1+n^c(n-d-1)p_n^2) \leqslant \exp (-Cn^{1+c-2\alpha}).
	\end{align}
	In particular, if $c>2\alpha-1$, we obtain  $W_{\sigma_1,\sigma_2}-d+1 \leqslant n^{1+c-2\alpha}$ with high probability.
\end{lemma}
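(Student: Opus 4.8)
\smallskip

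The plan is to determine, combinatorially, exactly which $(d-1)$-simplexes are neighbours of both $\sigma_1$ and $\sigma_2$, to see that they split into a \emph{deterministic} family of size $d-1$ and a \emph{random} family whose size is binomially distributed, and then to bound the upper tail of that binomial by a Chernoff estimate. Set $T := \sigma_1 \cup \sigma_2$. Since $\sigma_1 \sim \sigma_2$, $T$ is a $d$-simplex lying in $X_{d,p_n}$, so $\sigma_1$ and $\sigma_2$ share exactly $d-1$ vertices; write $\{u_1\} := \sigma_1 \setminus \sigma_2$ and $\{u_2\} := \sigma_2 \setminus \sigma_1$, so that $T = (\sigma_1 \cap \sigma_2) \cup \{u_1, u_2\}$ has $d+1$ vertices. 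If $\sigma \in X_{d-1}$ is a common neighbour of $\sigma_1$ and $\sigma_2$, then $\sigma \ne \sigma_1, \sigma_2$, and $\sigma$ has a unique vertex $b_1 \notin \sigma_1$ and a unique vertex $b_2 \notin \sigma_2$.

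I would then split on whether $b_1 = b_2$. If $b_1 \ne b_2$, then the only vertex of $\sigma$ outside $\sigma_2$ is $b_2 \ne b_1$, so $b_1 \in \sigma_2 \setminus \sigma_1 = \{u_2\}$, i.e.\ $b_1 = u_2$, and symmetrically $b_2 = u_1$; hence $\{u_1,u_2\} \subseteq \sigma$ and $\sigma \setminus \{u_1,u_2\}$ is a $(d-2)$-subset of $\sigma_1 \cap \sigma_2$, forcing $\sigma = \big((\sigma_1\cap\sigma_2)\setminus\{a\}\big)\cup\{u_1,u_2\}$ for a unique $a \in \sigma_1 \cap \sigma_2$. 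Each such $\sigma$ really is a common neighbour, since $\sigma\cup\sigma_1 = \sigma\cup\sigma_2 = T \in X_{d,p_n}$; these $d-1$ simplexes are present deterministically once $\sigma_1 \sim \sigma_2$. If instead $b_1 = b_2 =: b$, then $\sigma \setminus \{b\}$ is a $(d-1)$-subset of $\sigma_1\cap\sigma_2$, so $\sigma = (\sigma_1\cap\sigma_2)\cup\{b\}$ with $b \notin T$ (the choices $b=u_1$ and $b=u_2$ return $\sigma_1$ and $\sigma_2$, which are excluded), so $b$ ranges over the $n-d-1$ vertices outside $T$, and $\sigma$ is a common neighbour precisely when $\sigma_1\cup\{b\} \in X_{d,p_n}$ and $\sigma_2\cup\{b\} \in X_{d,p_n}$. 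Across the admissible $b$, these conditions involve $2(n-d-1)$ pairwise distinct $d$-simplexes, all different from $T$, so the associated indicators are i.i.d.\ $\mathrm{Bernoulli}(p_n^2)$ and independent of $\{\sigma_1\sim\sigma_2\}=\{T \in X_{d,p_n}\}$. Therefore, conditionally on $\sigma_1 \sim \sigma_2$,
\[
W_{\sigma_1,\sigma_2} = (d-1) + Z, \qquad Z \sim \mathrm{Bin}(n-d-1,\, p_n^2).
\]

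It then remains to control $P(Z \ge n^c\,\mathbb{E} Z)$ with $\mathbb{E} Z = (n-d-1)p_n^2$. By the multiplicative Chernoff bound $P(Z \ge \lambda\,\mathbb{E} Z) \le \exp\!\big(-\mathbb{E} Z\,(\lambda\ln\lambda-\lambda+1)\big)$, valid for $\lambda \ge 1$, taking $\lambda = n^c$ and using $\lambda\ln\lambda-\lambda+1 = n^c(c\ln n - 1)+1 \ge n^c$ for $n$ large together with $n-d-1 \ge n/2$, one gets $\mathbb{E} Z\cdot n^c \ge \tfrac12 n^{1+c-2\alpha}$, hence $P(Z \ge n^c\,\mathbb{E} Z) \le \exp(-\tfrac12 n^{1+c-2\alpha})$ for all large $n$; shrinking the constant absorbs the finitely many small $n$, giving \eqref{eq.1.0.5}. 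For the last assertion, on the complement of that event $W_{\sigma_1,\sigma_2} - d + 1 = Z < n^c(n-d-1)p_n^2 \le n^{1+c-2\alpha}$, and if $c > 2\alpha-1$ the exponent $1+c-2\alpha$ is strictly positive, so $\exp(-Cn^{1+c-2\alpha}) = o(n^{-s})$ for every $s>0$, i.e.\ $W_{\sigma_1,\sigma_2}-d+1 \le n^{1+c-2\alpha}$ with high probability. The only genuine work is the combinatorial case analysis — cleanly separating the $d-1$ forced common neighbours from the $p_n^2$-random ones and checking the independence structure; the concentration step is entirely routine.
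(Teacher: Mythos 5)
Your proof is correct and follows essentially the same route as the paper's: both decompose the common neighbours into the $d-1$ deterministic ones contained in $\sigma_1\cup\sigma_2$ and the $\mathrm{Bin}(n-d-1,p_n^2)$ count of simplexes of the form $(\sigma_1\cap\sigma_2)\cup\{v\}$ with $v\notin\sigma_1\cup\sigma_2$, and then apply a multiplicative Chernoff bound with ratio $n^c$. Your case analysis on $b_1=b_2$ versus $b_1\neq b_2$ and the explicit check that the $2(n-d-1)$ relevant $d$-simplexes are distinct from each other and from $\sigma_1\cup\sigma_2$ are just a more detailed rendering of what the paper asserts directly.
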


In the next lemma, for $\sigma_1\sim \sigma_2$ and $\sigma_3\sim \sigma_4$, we derive a lower bound (with high probability) on the number of simplexes that are connected only with $\sigma_1,\sigma_2\in X_{d-1}$, not with $\sigma_3, \sigma_4$. We write $a_n=\Theta(b_n)$ if there exist  $C_1,C_2>0$ such that $C_1b_n\le a_n\le C_2b_n$ for all large $n$.

\begin{lemma}\label{lem.recon.2}
	Let $\sigma_1, \sigma_2,\sigma_3,\sigma_4\in X_{d-1}$ such that $\sigma_1\sim \sigma_2$ and $\sigma_3\sim \sigma_4$. Define 
	\begin{align*}
		S&=S_{\sigma_1,\sigma_2,\sigma_3,\sigma_4}:=\{\sigma\in X_{d-1} \suchthat \sigma\sim \sigma_i \mbox{ for } i=1,2,3,4\},
		\\Z&=Z_{\sigma_1,\sigma_2,\sigma_3,\sigma_4}:=\one\{\sigma_1\sim \sigma_3,\sigma_1\sim\sigma_4\}+\one\{\sigma_2\sim \sigma_3,\sigma_2\sim\sigma_4\}.
	\end{align*}
 If $(1-2\alpha)>0$ then, for large $n$,
	\begin{equation} \label{eq.1.1}
		P\left(W_{\sigma_1,\sigma_2} -|S|-Z \leqslant \frac{1}{2}np_n^2\right) \le \exp(-\Theta(n^{1-2\alpha})).
	\end{equation}
\end{lemma}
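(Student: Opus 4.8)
The plan is to write $W_{\sigma_1,\sigma_2} - |S| - Z$ as a count of the ``good'' $(d-1)$-simplexes $\sigma$: those that are neighbours of both $\sigma_1$ and $\sigma_2$, but \emph{not} neighbours of both $\sigma_3$ and $\sigma_4$, while subtracting off the purely deterministic contribution $Z$ coming from the cases where one of the $\sigma_i$ itself plays the role of $\sigma$. First I would fix the combinatorial bookkeeping: recall from the preliminaries that $\sigma\sim\sigma_1$ and $\sigma\sim\sigma_2$ with $\sigma\neq\sigma_1,\sigma_2$ forces $\sigma$ to be of the form $(\sigma_1\cup\sigma_2)\setminus\{v\}$ for some vertex $v$, so $W_{\sigma_1,\sigma_2}$ counts (essentially) the vertices $w\notin\sigma_1\cup\sigma_2$ such that both $\sigma_1\cup\{w\}\cup(\text{the extra vertex of }\sigma_2)$ and the symmetric simplex lie in $X^d$, plus the at most $d-1$ deterministic common neighbours sitting inside $\sigma_1\cup\sigma_2$ (this is exactly the $d-1$ term already isolated in Lemma \ref{lem.recon.1}). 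I would set up disjoint index sets of candidate vertices for $W_{\sigma_1,\sigma_2}$ and for $|S|$, so that $W_{\sigma_1,\sigma_2}-|S|-Z$ becomes a sum of independent indicator-type random variables, each governed by a bounded product of the $p_n$'s (two factors $p_n$ for membership in $S_{\sigma_1}\cap S_{\sigma_2}$, and then we are \emph{not} imposing the two further $p_n$-constraints that would put $\sigma$ into $S$). The key point is that the number of such candidate vertices is $n-\Theta(1)$ and each is ``good'' with probability $\Theta(p_n^2)=\Theta(n^{-2\alpha})$ minus a lower-order correction, so the mean of $W_{\sigma_1,\sigma_2}-|S|-Z$ is $(1+o(1))\,n p_n^2 \gg \tfrac12 n p_n^2$ under $1-2\alpha>0$.

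Next I would establish independence carefully. The events ``$\sigma\in S_{\sigma_1}\cap S_{\sigma_2}$'' for distinct candidate $\sigma$ depend on disjoint sets of $d$-simplices (each $\sigma=(\sigma_1\cup\sigma_2)\setminus\{v\}\cup\{w\}$ type object determines a unique pair of $d$-simplices $\sigma\cup(\text{vtx})$), hence they are mutually independent; the random variable $Z$ depends only on the finitely many $d$-simplices among $\sigma_1,\dots,\sigma_4$, which we can either condition on or absorb into the deterministic part by noting $Z\le 2$. Having reduced to a sum $T=\sum_i \xi_i$ of independent Bernoulli variables with $\E T=(1+o(1))np_n^2$, the tail bound $P(T\le \tfrac12 np_n^2)\le P(T\le \tfrac12\E T)\le \exp(-\Theta(\E T))=\exp(-\Theta(n^{1-2\alpha}))$ follows from a standard Chernoff bound for the lower tail of a sum of independent indicators.

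The main obstacle I anticipate is \emph{not} the concentration step — that is routine Chernoff — but rather the careful accounting that makes $W_{\sigma_1,\sigma_2}-|S|-Z$ literally equal to a sum of independent indicators with the right mean. One has to handle the overlaps among $\sigma_1,\sigma_2,\sigma_3,\sigma_4$: these four $(d-1)$-simplices may share vertices in complicated ways, the ``ambient'' vertex set $\sigma_1\cup\sigma_2\cup\sigma_3\cup\sigma_4$ can have anywhere from $d+1$ up to $2(d+1)$ vertices, and the deterministic ``inside'' common neighbours (the $d-1$ term) as well as the degenerate cases where a candidate $\sigma$ coincides with some $\sigma_i$ must be peeled off — that is precisely what the correction $Z$ is designed to do. I would dispatch this by splitting the candidate $\sigma$'s into (i) those lying entirely inside $\sigma_1\cup\sigma_2$ (finitely many, deterministic, handled by the $d-1$ offset), (ii) the generic ones indexed by an outside vertex $w$, which contribute the independent sum, and (iii) the at most two cases $\sigma\in\{\sigma_3,\sigma_4\}$ whose membership in $S_{\sigma_1}\cap S_{\sigma_2}$ is recorded by $Z$; then argue that removing the ``$\sigma\in S$'' simplices only removes generic terms (each a further $p_n^2$-event), so that $W_{\sigma_1,\sigma_2}-|S|-Z\ge \sum_{w}\xi_w$ where $\xi_w$ is the indicator that $w$ gives a neighbour of $\sigma_1,\sigma_2$ but the corresponding extra two $d$-simplices forcing $\sigma\sim\sigma_3$ and $\sigma\sim\sigma_4$ are absent — and these $\xi_w$ are independent with $\E\xi_w=\Theta(p_n^2)$ over $\Theta(n)$ values of $w$. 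Once that inequality is in place, \eqref{eq.1.1} is immediate.
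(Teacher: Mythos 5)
Your proposal is correct and follows essentially the same route as the paper: both identify the non-trivial common neighbours of $\sigma_1,\sigma_2$ as simplexes of the form $(\sigma_1\cap\sigma_2)\cup\{w\}$ indexed by outside vertices $w$, observe that membership is governed by disjoint (hence independent) pairs of $d$-simplices so that $W_{\sigma_1,\sigma_2}-|S|-Z$ dominates, up to a bounded additive constant, a $\mathrm{Bin}(n-O(1),\Theta(p_n^2))$ variable, and conclude with the lower-tail Chernoff bound. The only difference is organizational: the paper runs an explicit case analysis on $|\sigma_1\cap\sigma_2\cap\sigma_3\cap\sigma_4|$ (which controls how large $S$ can be), whereas you fold all cases into a single family of indicators $\xi_w$; both work.
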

\noindent The proofs of Lemmas \ref{lem.recon.1} and \ref{lem.recon.2} are given at the end of this section.
We note down \cite[Lemma $3$, Lemma $4$]{gaudio2020shotgun} which will be used in the proofs.

\begin{lemma}\label{lemma.gm.cb}[Chernoff's bound]
	Let $X_1,X_2, \ldots,X_n$ be independent indicator random variables and call $X= \sum_{i=1}^n X_i$. Then for any $\delta >0$,
	$$P(X\leqslant (1-\delta)\E(X)) \leqslant \exp\left(-\frac{\delta^2}{2}\E(X) \right) \text{ and}$$
	$$P(X\geqslant (1+\delta)\E(X)) \leqslant \exp\left(-\frac{\delta^2}{2+\delta}\E(X) \right).$$
\end{lemma}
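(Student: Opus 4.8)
The plan is the standard exponential moment (Bernstein--Chernoff) argument. Write $p_i=\E(X_i)=P(X_i=1)$ and $\mu=\E(X)=\sum_{i=1}^n p_i$. For any real $s$, independence gives $\E(e^{sX})=\prod_{i=1}^n\E(e^{sX_i})=\prod_{i=1}^n(1+p_i(e^{s}-1))$, and applying $1+x\le e^x$ in each factor yields the clean estimate $\E(e^{sX})\le\exp(\mu(e^{s}-1))$. Combined with Markov's inequality $P(X\ge a)\le e^{-ta}\E(e^{tX})$ for $t>0$, and the analogous $P(X\le a)\le e^{ta}\E(e^{-tX})$, this reduces both claimed inequalities to minimising a one-variable exponent in $t$.

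For the upper tail, put $a=(1+\delta)\mu$; the above gives $P(X\ge(1+\delta)\mu)\le\exp(\mu(e^{t}-1)-t(1+\delta)\mu)$, and setting the derivative of the exponent to zero forces $t=\ln(1+\delta)>0$, which returns the classical bound $P(X\ge(1+\delta)\mu)\le(e^{\delta}(1+\delta)^{-(1+\delta)})^{\mu}$. The stated form then follows from the scalar inequality $\ln(1+\delta)\ge\frac{2\delta}{2+\delta}$ for all $\delta\ge0$ (equivalently $\delta-(1+\delta)\ln(1+\delta)\le-\frac{\delta^2}{2+\delta}$), which I would verify by noting that both sides agree at $\delta=0$ and that the derivative difference $\frac1{1+\delta}-\frac4{(2+\delta)^2}=\frac{\delta^2}{(1+\delta)(2+\delta)^2}$ is nonnegative.

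For the lower tail the computation is the mirror image: for $t>0$, $P(X\le(1-\delta)\mu)\le\exp(\mu(e^{-t}-1)+t(1-\delta)\mu)$, optimised at $t=-\ln(1-\delta)>0$ (one may assume $0<\delta<1$, since for $\delta\ge1$ one has $\{X\le(1-\delta)\mu\}\subseteq\{X=0\}$ with $P(X=0)\le e^{-\mu}$, so the bound is immediate), giving $P(X\le(1-\delta)\mu)\le(e^{-\delta}(1-\delta)^{-(1-\delta)})^{\mu}$. The claimed $\exp(-\tfrac12\delta^2\mu)$ bound then follows from $(1-\delta)\ln(1-\delta)\ge-\delta+\tfrac12\delta^2$ on $[0,1)$, which drops out of the Taylor expansion $(1-\delta)\ln(1-\delta)=-\delta+\tfrac12\delta^2+\tfrac16\delta^3+\cdots$ with all higher-order coefficients positive.

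The whole argument is short and self-contained, so I do not anticipate a real obstacle; the only non-mechanical ingredients are the two elementary calculus inequalities bounding the sharp Chernoff exponents $e^{\delta}(1+\delta)^{-(1+\delta)}$ and $e^{-\delta}(1-\delta)^{-(1-\delta)}$ by the convenient closed forms, and these are classical — indeed the paper simply imports the statement from \cite{gaudio2020shotgun}, so one could also just cite it verbatim.
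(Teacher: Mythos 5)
Your proof is correct and complete: the exponential-moment bound $\E(e^{sX})\le\exp(\mu(e^s-1))$, Markov's inequality, the optimal choices $t=\ln(1+\delta)$ and $t=-\ln(1-\delta)$, and the two elementary inequalities $\ln(1+\delta)\ge\frac{2\delta}{2+\delta}$ and $(1-\delta)\ln(1-\delta)\ge-\delta+\frac12\delta^2$ are exactly what is needed, and your verifications of both (the derivative comparison and the Taylor expansion, including the separate treatment of $\delta\ge1$ for the lower tail) check out. The paper itself gives no proof of this lemma --- it is imported verbatim from the cited reference --- so your argument is the standard one that reference relies on, and there is nothing to compare beyond noting that you have supplied the derivation the paper omits.
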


\begin{lemma}\cite[Lemma 4]{gaudio2020shotgun}\label{lem:rec.compair}
	Let $X$ and $Y$ be random variables such that conditioned on $Y$, $X\sim Bin(Y,p)$. Let $Z(m)\sim Bin(m,p)$. Then 
	\[
	\P(X\le t_1\given Y\ge t_2)\le \P(Z(t_2)\le t_1) \mbox{ and } \P(X\ge t_2\given Y\le t_1)\le \P(Z(t_2)\ge t_1).
	\]
\end{lemma}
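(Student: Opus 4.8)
The plan is to prove Lemma~\ref{lem:rec.compair} by a direct coupling argument, treating the two assertions symmetrically. Recall the setup: conditioned on $Y$, the variable $X$ is $\mathrm{Bin}(Y,p)$, and $Z(m)\sim\mathrm{Bin}(m,p)$ is an independent benchmark. The key probabilistic fact I will use is the monotonicity of the Binomial family in its first parameter: if $m_1\le m_2$, then $\mathrm{Bin}(m_1,p)$ is stochastically dominated by $\mathrm{Bin}(m_2,p)$. Concretely, I would realize everything on one probability space by writing $X=\sum_{i=1}^{Y}B_i$ where $B_1,B_2,\dots$ are i.i.d.\ $\mathrm{Bernoulli}(p)$ random variables independent of $Y$, and setting $Z(m)=\sum_{i=1}^m B_i$ for each deterministic $m$. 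This makes $m\mapsto Z(m)$ nondecreasing pointwise, which is the engine of the whole argument.

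For the first inequality, $\P(X\le t_1\mid Y\ge t_2)\le \P(Z(t_2)\le t_1)$: I would condition on the value of $Y$, say $Y=y$ with $y\ge t_2$. Then on the event $\{Y=y\}$ we have $X=Z(y)\ge Z(t_2)$ by the pointwise monotonicity above, so $\{X\le t_1\}\subseteq\{Z(t_2)\le t_1\}$ on that event, giving $\P(X\le t_1\mid Y=y)\le\P(Z(t_2)\le t_1)$ — and crucially the right-hand side does not depend on $y$. Averaging over the conditional law of $Y$ given $\{Y\ge t_2\}$ yields the claim. The second inequality, $\P(X\ge t_2\mid Y\le t_1)\le\P(Z(t_2)\ge t_1)$, is slightly more delicate because two different thresholds appear; here I would condition on $Y=y$ with $y\le t_1$, note $X=Z(y)\le Z(t_1)$, so $\{X\ge t_2\}\subseteq\{Z(t_1)\ge t_2\}$, giving the bound $\P(Z(t_1)\ge t_2)$. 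To convert this into the stated bound $\P(Z(t_2)\ge t_1)$ one needs $\P(Z(t_1)\ge t_2)\le \P(Z(t_2)\ge t_1)$; under the implicit hypothesis $t_1\le t_2$ (which is the regime in which the lemma is applied — one thinks of $t_2$ as the ``large'' threshold), this follows again from stochastic monotonicity since $Z(t_1)\preceq Z(t_2)$, hence $\P(Z(t_1)\ge t_2)\le\P(Z(t_2)\ge t_2)\le\P(Z(t_2)\ge t_1)$.

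The only genuine subtlety — and the point I would flag as the main obstacle, or at least the place where one must be careful about hypotheses — is the book-keeping in the second inequality, where the two thresholds $t_1,t_2$ play asymmetric roles and one must invoke $t_1\le t_2$ to close the gap between $\P(Z(t_1)\ge t_2)$ and $\P(Z(t_2)\ge t_1)$. Everything else is a routine application of the coupling; no concentration inequality is needed for the lemma itself. Since this statement is quoted as \cite[Lemma 4]{gaudio2020shotgun}, I would present the coupling proof compactly and simply remark that the inequality is used downstream with $t_1$ a lower bound and $t_2$ an upper bound on the relevant Binomial parameter, so the ordering hypothesis is automatically met in all applications in Sections~\ref{sec.pfmain1} and~\ref{sec.pfmain2}.
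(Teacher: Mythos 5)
The paper does not prove this lemma; it is imported from \cite[Lemma 4]{gaudio2020shotgun}, so there is no internal proof to compare against. Your coupling argument is the standard proof: writing $X=\sum_{i=1}^{Y}B_i$ and $Z(m)=\sum_{i=1}^{m}B_i$ with the $B_i$ i.i.d.\ Bernoulli$(p)$ independent of $Y$ makes $m\mapsto Z(m)$ pointwise nondecreasing, and your treatment of the first inequality --- bounding $\P(X\le t_1\mid Y=y)$ by the $y$-free quantity $\P(Z(t_2)\le t_1)$ for each $y\ge t_2$ and then averaging over the conditional law of $Y$ --- is complete and correct.

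You are also right that the second inequality, as printed, is the problematic one: the coupling gives $\P(X\ge t_2\mid Y\le t_1)\le\P(Z(t_1)\ge t_2)$, not $\P(Z(t_2)\ge t_1)$, and the printed bound is genuinely false without an extra hypothesis (take $Y\equiv t_1$ large and $t_2=1$). The cleanest resolution, however, is not your auxiliary assumption $t_1\le t_2$ but the observation that the display is a misquotation of the source: the intended statement is $\P(X\ge t_1\mid Y\le t_2)\le\P(Z(t_2)\ge t_1)$, so that in both inequalities $t_2$ bounds the Binomial parameter $Y$ and $t_1$ is the threshold for $X$; in that form the claim follows from your coupling uniformly, with no ordering assumption. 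That is also the form actually invoked downstream: in the proof of Lemma \ref{lem.thm2.1} the Binomial on the right-hand side has parameter $\binom{nq_n}{2}$ (the conditioning bound on $|D_\sigma|$) while the threshold $\frac{d}{2}n^{2}q_n^{2}t_n\approx\binom{nq_n}{2}p_n(1+n^c)$ is carried over unchanged, and for the relevant range of $c$ and $\alpha$ that threshold is \emph{smaller} than the parameter bound --- so under the literal index-matching your ordering hypothesis $t_1\le t_2$ fails there, and your closing claim that it is automatic in all applications is not correct. Your chain $\P(Z(t_1)\ge t_2)\le\P(Z(t_2)\ge t_2)\le\P(Z(t_2)\ge t_1)$ is valid when $t_1\le t_2$, but the right fix is to restate the lemma with the indices as in the source, after which your proof of the first inequality applies verbatim to the second.
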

\noindent Now we proceed to prove Theorem \ref{main.thm.1}.
\begin{proof}[Proof of Theorem \ref{main.thm.1}] Let $\sigma_1,\sigma_2\in X_{d-1}$ such that $\sigma_1\sim \sigma_2$. Suppose  $H_{\sigma_1,\sigma_2}$ denotes the sub-complex induced by  the vertices of $S_{\sigma_1} \cap S_{\sigma_2} $ in $Y_d(n,p_n)$.  Note that the sub-complex $H_{\sigma_1,\sigma_2}$ is random.	For $\sigma_1,\sigma_2,\sigma_3,\sigma_4\in X_{d-1} $ such that $\sigma_1\sim \sigma_2$, $\sigma_3\sim \sigma_4$, we show that, for $s>0$, 
	\begin{align}\label{eqn:isomorphic}
		\P(H_{\sigma_1,\sigma_2}\simeq H_{\sigma_3,\sigma_4})=o(n^{-s}) \mbox{ whenever } (\sigma_1,\sigma_2)\neq (\sigma_3,\sigma_4).
	\end{align}
 	Then the result follows from Lemma \ref{lemma.gm.1} and \eqref{eqn:isomorphic}.  It remains to prove  \eqref{eqn:isomorphic}.

 Let $S$ be as defined in Lemma \ref{lem.recon.2} and $Y_1$ be the sub-complex induced by the simplexes of  $S_{\sigma_1} \cap S_{\sigma_2} \backslash (S\cup\{\sigma_3,\sigma_4\})$, the  shared neighbours of $\sigma_1$ and $\sigma_2$ (excluding $\sigma_3$ and $\sigma_4$) that are not neighbours of both $\sigma_3$ and $\sigma_4$. Let $Y_2$ be the sub-complex induced by the simplexes of $S_{\sigma_3} \cap S_{\sigma_4} $. Note that $Y_1$ and $Y_2$ are disjoint by construction.

Observe that if $H_{\sigma_1,\sigma_2}\simeq H_{\sigma_3,\sigma_4}$ then $W_{\sigma_1,\sigma_2}=W_{\sigma_3,\sigma_4}$ and $Y_1$ can be embedded into $Y_2$ as a sub-complex of $Y_2$ (we write $Y_1 \subset Y_2$ with the abuse of notation). Thus
\begin{align*}
\P(H_{\sigma_1,\sigma_2} \simeq H_{\sigma_3,\sigma_4})\le \P(Y_1\subset Y_2).
\end{align*}
We show that 
\begin{align}\label{eqn:graph}
P\left(Y_1 \subset Y_2\r)\le n^4(n^{an^{1+c-2\alpha}-bn^{2-5\alpha}}+exp(-Cn^{1+c-2\alpha})),
\end{align}
where $\max\{0,2\alpha-1\} <c< 1-3\alpha$ and $C>0$. The right hand side of the above equation will go to zero if $2-5\alpha > 1+c-2\alpha$, which is equivalent to say that $c<1-3\alpha$. This is a consistent condition if $\alpha<\frac{1}{3}$. 
Applying a union bound, 
\begin{align*}
	&\P\{\exists \sigma_1\sim \sigma_2,\sigma_3\sim \sigma_4 \suchthat H_{\sigma_1,\sigma_2}\simeq H_{\sigma_3,\sigma_4}\}\\&\le n^{4d}\P\{H_{\sigma_1,\sigma_2}\simeq H_{\sigma_3,\sigma_4}\}
	\\&\le n^{4(d+1)}(n^{an^{1+c-2\alpha}-bn^{2-5\alpha}}+exp(-Cn^{1+c-2\alpha}))
	\\&=o(n^{-s}),
\end{align*}
for any $s>0$ as $\alpha<1/3$. Thus, for any $(\sigma_1,\sigma_2)\neq(\sigma_3,\sigma_4)$, we have $H_{\sigma_1,\sigma_2}\not\simeq H_{\sigma_3,\sigma_4}$ with high probability if $\alpha<1/3$. This proves result.

The rest of the proof is dedicated to prove \eqref{eqn:graph}.
We have
\begin{align*}
	&\P(Y_1\subset Y_2)\\\le &\sum_{\lambda,\mu,k}\P(\left\{Y_1\subset Y_2 \suchthat W_{\sigma_1, \sigma_2} = W_{\sigma_3,\sigma_4} = \lambda+Z, |S|=\mu, Supp_d(Y_1^{d-1})=k\right\}),\nonumber
\end{align*}
where $Supp_d(A) =| \{\sigma_1 \cup \sigma_2 \in X_{d,p_n}| \sigma_1, \sigma_2 \in A\}|$ for $A \subseteq X_{d-1}$.  
Note that, given $|S|=\mu$, at most $2\mu+1$ $d$-simplexes are revealed in $X_{d,p_n}$. Therefore 
\begin{align*}
&P\left(Y_1 \subset Y_2 \suchthat W_{\sigma_1, \sigma_2} = W_{\sigma_3,\sigma_4} = \lambda+Z, |S|=\mu, Supp_d(Y_1^{d-1})=k\right)\nonumber\\
\leqslant & \binom{\lambda+2}{\lambda-\mu} (\lambda -\mu)!p_n^{k-2\mu-1}
\\\leqslant& (\lambda+2)^{\lambda-\mu} \left(n^{-\alpha}\right)^{k-2\lambda-1},
\end{align*}
as  $\mu \leqslant \lambda$ and $p_n=n^{-\alpha}$. Again, $\lambda+2\le n$ and $\lambda-\mu\le \lambda$ implies that 
\begin{align}\label{eqn:upperp}
&P\left(Y_1 \subset Y_2 \suchthat W_{\sigma_1, \sigma_2} = W_{\sigma_3,\sigma_4} = \lambda+Z, |S|=\mu, Supp_d(Y_1^{d-1})=k\right)\nonumber
\\&\le \exp \left(\lambda \log (n) -\alpha(k-2\lambda-1)\log (n)\right)\nonumber
\\
&\leqslant  \exp\{((2\alpha+1)\lambda+\alpha-\alpha k)\log n\}\nonumber
\\&=n^{(2\alpha+1)\lambda+\alpha-\alpha k}.
\end{align}
 Next we complete the proof of \eqref{eqn:graph} using the following two claims
\begin{align}\label{eqn:upperlambda}
	\P(\lambda\le n^{1+c-2\alpha})&\ge 1- \exp(-Cn^{1+c-2\alpha}).\\
	\P(k\ge C n^{2-5\alpha} )&\ge 1-\exp(-C_2n^{2-5\alpha}). 	\label{eqn:lowerk}
\end{align}
Using \eqref{eqn:upperlambda} and \eqref{eqn:lowerk} from \eqref{eqn:upperp} we get 
\begin{align*}
	P\left(Y_1 \subset Y_2 \suchthat W_{\sigma_1, \sigma_2} = W_{\sigma_3,\sigma_4},  |S|, Supp_d(Y_1^{d-1})\right)\le n^{an^{1+c-2\alpha}-bn^{2-5\alpha}},
\end{align*}
with probability at least $1-\exp(-Cn^{1+c-2\alpha})$. Therefore  we get 
\[
P\left(Y_1 \subset Y_2\r)\le n^4.n^{an^{1+c-2\alpha}-bn^{2-5\alpha}}+n^4exp(-Cn^{1+c-2\alpha}),
\]
as $\lambda,\mu\le n$ and $k\le n^2$. This completes the proof of \eqref{eqn:graph}. It remains to prove \eqref{eqn:upperlambda} and \eqref{eqn:lowerk}.

\vspace{.2cm}
\noindent{\it Proof of \eqref{eqn:upperlambda}:}  Observe that \eqref{eqn:upperlambda} follows from 
Lemma \ref{lem.recon.1}.

\vspace{.2cm} 
\noindent {\it Proof of \eqref{eqn:lowerk}:} 
Clearly, given $W_{\sigma, \sigma^\prime} -|S|-Z$, $Supp_d(Y_1^{d-1})\sim Bin(W_{\sigma, \sigma^\prime} -|S|-Z,p_n)$. From Lemma \ref{lem.recon.2}, we have 
\[
\P(W_{\sigma, \sigma^\prime} -|S|-Z\ge \frac{1}{2}np_n^2)\ge 1- e^{-\Theta(n^{1-2\alpha})}.
\]
The right hand side goes to $1$ if $1-2\alpha>0$. Lemma \ref{lem:rec.compair} and Lemma \ref{lemma.gm.cb} imply that
\begin{align*}
    &P\left(Supp_d(Y_1^{d-1}) \leqslant (1-\epsilon)p_n \begin{pmatrix} \frac{1}{2}np_n^2 \\ 2 \end{pmatrix}\given (W_{\sigma, \sigma^\prime} -|S|-Z)\ge\frac{1}{2}np_n^2 \right) \nonumber 
    \\&\le P\left(Bin(\frac{1}{2}np_n^2,p_n)\leqslant (1-\epsilon)p_n \begin{pmatrix} \frac{1}{2}np_n^2 \\ 2 \end{pmatrix}\r)\nonumber
    \\&\leqslant \exp\left(-\frac{\epsilon^2}{2}p_n\begin{pmatrix} \frac{1}{2}np_n^2 \\ 2 \end{pmatrix} \right)\nonumber \\
    &= \exp(-C_2n^{2-5\alpha}),
\end{align*}
for some positive constant $C_2$. Thus we have 
\begin{align*}
	\P(Supp_d(Y_1^{d-1})\ge C n^{2-5\alpha} )\ge 1-\exp(-C_2n^{2-5\alpha}),
\end{align*}
for some positive constant $C$. This complete the proof.
\end{proof}

Next we give the proofs of Lemmas \ref{lemma.gm.1}, \ref{lem.recon.1} and \ref{lem.recon.2}. The proof of Lemma \ref{lemma.gm.1}  can be derived from \cite[Lemma 2]{gaudio2020shotgun}, for sake of completeness we give a proof.
\begin{proof}[Proof of Lemma \ref{lemma.gm.1}]
	Since $X$ has complete $(d-1)$-dimensional skeleton, in order to reconstruct $X$ it is enough to check whether any two simplexes $\sigma_1, \sigma_2 \in X_{d-1}$ ($\sigma_1 \neq \sigma_2$) are connected in $X$. To determine that, we examine the neighbourhoods of $\sigma_1, \sigma_2$ by observing the sub-complexes $H_{\sigma_1,\sigma_3}$ and $H_{\sigma_2,\sigma_4}$ for neighbours $\sigma_1 \sim \sigma_3$ and $\sigma_2 \sim \sigma_4$. The reconstruction algorithm is as follows: We conclude that $\sigma_1 \sim \sigma_2$ in $X$ if there exist $\sigma_3, \sigma_4 \in X_{d-1}$ such that $\sigma_1 \sim \sigma_3$, $\sigma_2 \sim \sigma_4$ and $H_{\sigma_1,\sigma_3}$ is isomorphic with $H_{\sigma_2,\sigma_4}$.

	Suppose $\sigma_1 \sim \sigma_2$ in $X$. We choose $\sigma_3 = \sigma_1$ and $\sigma_4 = \sigma_2$. Then, $H_{\sigma_1,\sigma_3} = H_{\sigma_2,\sigma_4}= H_{\sigma_1,\sigma_2}$. Conversely, suppose there are some  $\sigma_3, \sigma_4 \in X_{d-1}$ such that $\sigma_1 \sim \sigma_3$, $\sigma_2 \sim \sigma_4$ and $H_{\sigma_1,\sigma_3}$ is isomorphic with $H_{\sigma_2,\sigma_4}$. Then, the hypothesis of the lemma says that $(\sigma_1,\sigma_3) = (\sigma_2,\sigma_4)$. Therefore, $\sigma_1=\sigma_4$ and $\sigma_3=\sigma_2$ because $\sigma_1\neq \sigma_2$. Hence, $(\sigma_1,\sigma_2)$ is an edge in $X$ or in other words $\sigma_1\sim \sigma_2$.

	So, continuing the process described in the algorithm, we recover the complex.
\end{proof}

 \begin{proof}[Proof of Lemma \ref{lem.recon.1}]
 	For $\sigma_1\sim \sigma_2$, define $S_{\sigma_1,\sigma_2}=S_{\sigma_1,\sigma_2}'\cup S_{\sigma_1,\sigma_2}''$ where
 	\begin{align*}
 		S_{\sigma_1,\sigma_2}'&=\{\sigma\in X_{d-1}\suchthat \sigma\sim \sigma_1,\sigma\sim \sigma_2, \sigma\subset \sigma_1\cup \sigma_2\}\\
 		S_{\sigma_1,\sigma_2}''&=\{\sigma\in X_{d-1}\suchthat \sigma\sim \sigma_1,\sigma\sim \sigma_2, \sigma\nsubseteq \sigma_1\cup \sigma_2\}
 	\end{align*}
 Clearly, $W_{\sigma_1,\sigma_2}=|S_{\sigma_1,\sigma_2}'|+|S_{\sigma_1,\sigma_2}''|$. 
 	Observe that if $\sigma_1,\sigma_2\neq \sigma\in X_{d-1}$ such that $\sigma\subset \sigma_1\cup\sigma_2$ then $\sigma\sim \sigma_1$ and $\sigma\sim \sigma_2$. Therefore 
 \begin{align}\label{eqn:s'}
  |S_{\sigma_1,\sigma_2}'|=d-1.
 \end{align}
 	Again  $\sigma_1\sim \sigma_2$ implies that $\sigma_1\cap \sigma_2\in X_{d-2}$. Therefore if $\sigma\sim \sigma_1,\sigma_2$  but $\sigma\nsubseteq \sigma_1\cup\sigma_2$ then $\sigma$ will be of the form $(\sigma_1\cap\sigma_2)\cup\{v\}$ for some $v\in X_0\backslash (\sigma_1\cup\sigma_2)$. See Figure \ref{fig:wsigma}.

 \begin{figure}[h]
 		\includegraphics[scale=0.1]{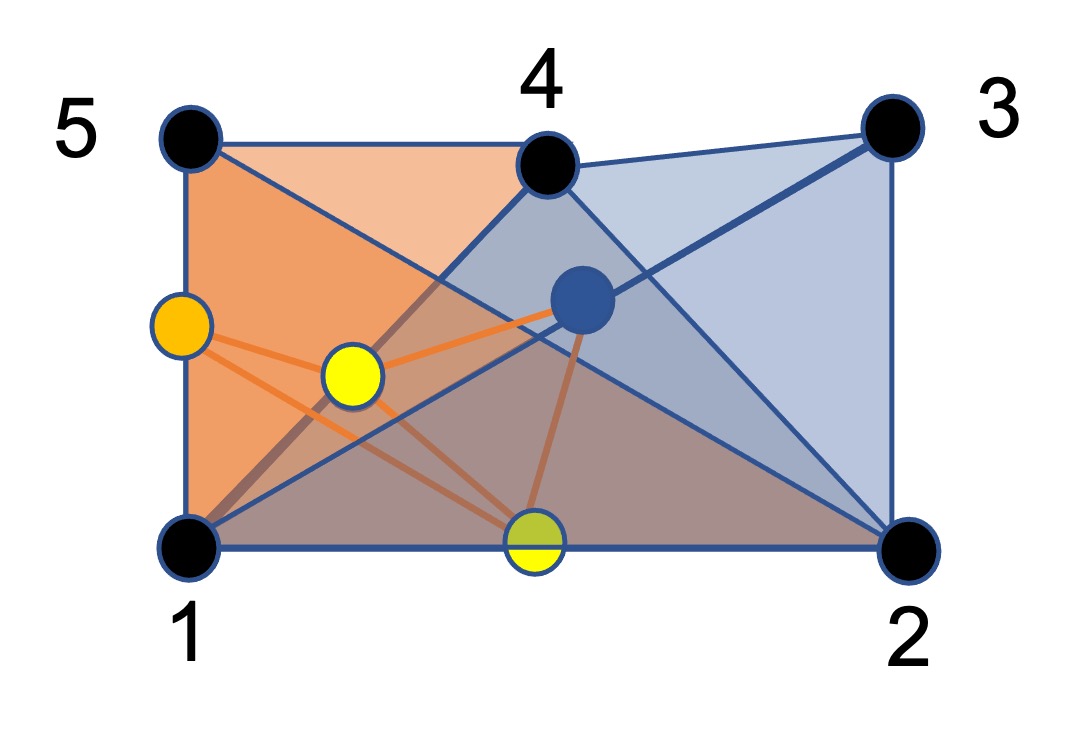}
 		\caption{Simplexes  $(1,5)$ and $(1,3)$ are connected with both $(1,4), (1,2)$, as $(125), (145), (134), (123)$ simplexes are included in the complex.}\label{fig:wsigma}
 	\end{figure}
 
 \noindent 	Therefore $S_{\sigma_1,\sigma_2}''$ can be written as 
 	\[
 		S_{\sigma_1,\sigma_2}''=\{(\sigma_1\cap\sigma_2)\cup\{v\}\suchthat v\in X_0, (\sigma_1\cap\sigma_2)\cup\{v\}\in X_{d,p}\}.
 		\]
	Which implies that $|S_{\sigma_1,\sigma_2}''|\sim Bin(n-d-1, p_n^2)$, as we need to add two simplexes to get an element in $S_{\sigma_1,\sigma_2}''$. See Figure \ref{fig:wsigma}. Let $c>0$, to be determined later. Using Lemma \ref{lemma.gm.cb}, we have 
	\begin{align}\label{eqn:s''}
		P(| S_{\sigma_1,\sigma_2}''|\geqslant n^c(n-d-1)p_n^2) &\leqslant \exp\left(-\frac{n^{2c}}{1+n^c}(n-d-1)n^{-2\alpha}\right) \nonumber\\
		& \leqslant \exp (-Cn^{1+c-2\alpha}),
	\end{align}
	for some constant $C>0$. We get the result by combining \eqref{eqn:s'} and \eqref{eqn:s''}. Clearly,  the right hand side goes to zero if $c>2\alpha-1$.
\end{proof}

\begin{proof}[Proof of Lemma \ref{lem.recon.2}]
Let $\sigma\in S$. Then $\sigma\sim \sigma_1,\sigma_2$ and $\sigma\sim \sigma_3,\sigma_4$ which imply that
\begin{align*}
	\sigma=(\sigma_1\cap \sigma_2)\cup\{v\} \mbox{ and } \sigma=(\sigma_3\cap \sigma_4)\cup\{v'\}
\end{align*} 
for some $v, v'\in X_0$. Thus we get the following identity
\begin{align}\label{eqn:identity}
(\sigma_1\cap \sigma_2)\cup\{v\}=(\sigma_3\cap \sigma_4)\cup\{v'\}.
\end{align}

\noindent{\bf Case-I:} Suppose  $|\cap_{i=1}^4\sigma_i|=d-1$, that is, $(\sigma_1\cap \sigma_2)=(\sigma_3\cap \sigma_4)$. Then any $v=v'\in X_0\backslash  (\cup_{i=1}^4\sigma_i)$ satisfies \eqref{eqn:identity}. Thus 
\[
\P((\sigma_1\cap \sigma_2)\cup\{v\}\in S_{\sigma_1,\sigma_2}''\backslash S)=p_n^2(1-p_n^2),
\]
where $S$ is as defined in Lemma \ref{lem.recon.2}. Therefore we get  $|S_{\sigma_1,\sigma_2}''|-|S|-Z\sim Bin(n-d-3, p_n^2(1-p_n^2))$. Fix $0<\eps<1/2$. Lemma \ref{lemma.gm.cb} implies that 
\[
\P(|S_{\sigma_1,\sigma_2}''|-|S|-Z\le \eps (n-d-3)p_n^2(1-p_n^2))\le \exp(-\frac{\eps^2}{2}(n-d-3)p_n^2(1-p_n^2)).
\]
If $1-2\alpha>0$, then the last equation implies that 
\begin{align}
\P(W_{\sigma_1,\sigma_2}-|S|-Z\le \frac{1}{2} np_n^2)\le \exp(-\Theta(n^{1-2\alpha})),
\end{align}
as $|S_{\sigma_1,\sigma_2}'|=d-1$.

\vspace{.2cm}
\noindent{\bf Case-II:}   Suppose $|\cap_{i=1}^4\sigma_i|=d-2$. Then there is only one choice of $v,v'$ which satisfies \eqref{eqn:identity}, namely, $v=(\sigma_1\cap\sigma_2)\backslash (\sigma_3\cap \sigma_4)$ and $v'=(\sigma_3\cap\sigma_4)\backslash (\sigma_1\cap \sigma_2)$. Thus $|S|\le 1$.   We have 
\[
W_{\sigma_1,\sigma_2}-|S|-Z\le W_{\sigma_1,\sigma_2}.
\]
Next we give bound on $W_{\sigma_1,\sigma_2}$. We have 
\[
W_{\sigma_1,\sigma_2}=|S_{\sigma_1,\sigma_2}'|+|S_{\sigma_1,\sigma_2}''|,
\]
where $S_{\sigma_1,\sigma_2}'$ and $S_{\sigma_1,\sigma_2}''$ are as defined in the proof of Lemma \ref{lem.recon.1}. Since $|S_{\sigma_1,\sigma_2}''|\sim Bin(n-d-1, p_n^2)$, by Chernoff's bound (Lemma \ref{lemma.gm.cb}), for $0<\eps<1/2$,
\[
\P(|S_{\sigma_1,\sigma_2}''|\le \eps (n-d-1)p_n^2)\le \exp(-\frac{\eps^2}{2}(n-d-1)p_n^2).
\]
If $1-2\alpha>0$ then the last equation implies that 
\[
\P(W_{\sigma_1,\sigma_2}\le \frac{1}{2}np_n^2)\le \exp(-\Theta(n^{1-2\alpha})),
\]
as $|S_{\sigma_1,\sigma_2}'|=d-1, |S|\le 1, |Z|\le 2$. Thus we get, for large $n$, 
	\begin{equation}
		P\left(W_{\sigma_1, \sigma_2} -|S|-Z \leqslant \frac{1}{2}np_n^2\right) \le \exp(-\Theta(n^{1-2\alpha})).
	\end{equation}

	\vspace{.2cm}
	\noindent{\bf Case-III:}  Suppose $|\cap_{i=1}^4\sigma_i|\le d-3$. Then there is no $v,v'\in X_0$ which satisfies \eqref{eqn:identity}. Thus $|S|=0$.  Hence the result follows as in Case II. 
	
	\vspace{.1cm}
	Similar analysis can be done when the edges are of the form $(\sigma_1,\sigma_2)$ and $(\sigma_2,\sigma_3)$. It can be shown  that $|S_{\sigma_1,\sigma_2}''|-|S|-Z\sim Bin(n-d-3, p_n^2(1-p_n))$ if $|\cap_{i=1}^3\sigma_i|= d-1$. Otherwise, $|S|\le 1$. Thus, following the calculation as in Case-I,II, if $1-2\alpha>0$ we get
	\begin{equation*}
		P\left(W_{\sigma_1, \sigma_2} -|S|-Z \leqslant \frac{1}{2}np_n^2\right) \le \exp(-\Theta(n^{1-2\alpha})).
	\end{equation*}
	We skip the details here. Hence the result.
\end{proof}

\section{Proof of Theorem \ref{main.thm.2}}\label{sec.pfmain2}
This section is dedicated for the proof of Theorem \ref{main.thm.2}. Let $X$ be a complex, where
\[
X:=\{\emptyset, X_0,\ldots,X_{d-1},X^d\},
\]
and $X^d\subseteq X_d$. Recall $S_\sigma$ denotes the the set of neighbours of $\sigma\in X_{d-1}$.
 Define 
\begin{align*}
	D_\sigma=D_{\sigma}(S_\sigma):=\{\sigma\cup \sigma' \suchthat \sigma'\in S_\sigma\} \mbox{ and } Supp_d(S_\sigma):=\{\sigma_1\cup\sigma_2\suchthat \sigma_1,  \sigma_2\in S_\sigma\}.
\end{align*}
Observe that each simplex in $D_\sigma(S_\sigma)$ contains $\sigma$. However, not every simplex  in $Supp_d(S_\sigma)$ contains $\sigma$. The set the simplexes which do not contain $\sigma$ is denoted by
\[
D_{\sigma}^*=D_{\sigma}^*(S_\sigma):=Supp_d(S_\sigma) \backslash D_{\sigma}(S_\sigma).
\]
 Observe that $|D_\sigma|=\deg(\sigma)$. The $1$-neighbourhood  of $\sigma$ in $X$ can be written as
\[
N_{1,X}(\sigma)=(S_\sigma, D_\sigma, D_\sigma^*).
\]
Fix $0<\epsilon < 1$ and $q_n = (1+\epsilon)p_n$. For $c>0$, suppose $t_n=(1+n^c)p_n$. Define
\begin{align}\label{eqn:S}
	\mathcal S=\{\{N_{1,X}(\sigma), \sigma\in X_{d-1}\} \suchthat |D_\sigma|< nq_n, |D_{\sigma}^*|< \frac{d}{2}n^2q_n^2t_n, \forall \sigma\in X_{d-1}\},
\end{align}
the set of all possible $1$-neighbourhood collections where the degree of each central vertex is less than $nq_n$ and each neighbourhood has fewer than $\frac{d}{2}n^{2}q_n^{2}t_n$ neighbouring $d$-faces those are not counted in $\deg(\sigma)$. 

\begin{lemma} \label{lem.thm2.1}
	Let $\mathcal N_1(X)$ and $\mathcal S$ be as defined in \eqref{eqn:kneighbour} and \eqref{eqn:S} respectively. Let $X\in Y_d(n,p_n)$ with $p_n=n^{-\alpha}$ and $0<\alpha<1$.  If  $c>3\alpha-2$ then  
	\[
	\P(\mathcal N_1(X)\in \mathcal S)\ge 1- e^{-an^{b}}
	\]
	for some positive constants $a$ and $b$.
\end{lemma}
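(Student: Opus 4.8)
The plan is to fix a single simplex $\sigma\in X_{d-1}$, bound the probability that its $1$-neighbourhood violates one of the two inequalities defining $\mathcal S$, and then take a union bound over the $\binom{n}{d}\le n^{d}$ choices of $\sigma$. For the degree constraint, recall that in $Y_d(n,p_n)$ one has $|D_\sigma|=\deg(\sigma)\sim\mathrm{Bin}(n-d,p_n)$ with mean $(n-d)p_n$; since $nq_n=(1+\epsilon)np_n\ge(1+\tfrac{\epsilon}{2})(n-d)p_n$ for all large $n$, Chernoff's upper-tail bound (Lemma~\ref{lemma.gm.cb}) gives $\P(|D_\sigma|\ge nq_n)\le\exp(-\Theta((n-d)p_n))=\exp(-\Theta(n^{1-\alpha}))$, and because $1-\alpha>0$ the factor $n^{d}$ from the union bound is absorbed.

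For the constraint on $|D_\sigma^{*}|$ I would first describe $D_\sigma^{*}$ combinatorially. Put $V_\sigma:=\{v\in X_0\setminus\sigma:\sigma\cup\{v\}\in X^d\}$, so $|V_\sigma|=\deg(\sigma)$, and observe that $\sigma'\in S_\sigma$ holds iff $\sigma'=(\sigma\setminus\{w\})\cup\{v\}$ for some $w\in\sigma$, $v\in V_\sigma$. If $\sigma_1\cup\sigma_2\in Supp_d(S_\sigma)$ fails to contain $\sigma$, then choosing $w\in\sigma\setminus(\sigma_1\cup\sigma_2)$ and using $\sigma\cap\sigma_i=\sigma\setminus\{w_i\}$ forces $w_1=w_2=w$, whence $\sigma_1\cup\sigma_2=(\sigma\setminus\{w\})\cup\{v_1,v_2\}$, which is a $d$-simplex exactly when $v_1\ne v_2$. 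Hence $D_\sigma^{*}$ is precisely the set of $\tau_{w,v_1,v_2}:=(\sigma\setminus\{w\})\cup\{v_1,v_2\}\in X^d$ with $w\in\sigma$ and $v_1,v_2\in V_\sigma$ distinct; moreover the map $\tau\mapsto(w,\{v_1,v_2\})$ is injective ($w$ is the unique element of $\sigma\setminus\tau$, $\{v_1,v_2\}=\tau\setminus\sigma$), so these $d$-simplexes are pairwise distinct and none of them contains $\sigma$. Consequently, after revealing the coins $\{\one\{\sigma\cup\{v\}\in X^d\}:v\in X_0\setminus\sigma\}$ — equivalently $V_\sigma$ — the remaining relevant coins $\{\one\{\tau_{w,v_1,v_2}\in X^d\}:w\in\sigma,\{v_1,v_2\}\subseteq V_\sigma\}$ index a disjoint family of $d$-simplexes and so are i.i.d.~$\mathrm{Ber}(p_n)$ independent of $V_\sigma$; therefore
\[
|D_\sigma^{*}|\ \big|\ V_\sigma\ \sim\ \mathrm{Bin}\!\Big(d\,\binom{|V_\sigma|}{2},\ p_n\Big).
\]

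Next I would work on the good event $\{|V_\sigma|<nq_n\}$ of the first step, on which $d\binom{|V_\sigma|}{2}<\tfrac{d}{2}n^{2}q_n^{2}$; writing $m_0:=\lfloor\tfrac{d}{2}n^{2}q_n^{2}\rfloor\ge d\binom{|V_\sigma|}{2}$, stochastic monotonicity of the Binomial in its number of trials (Lemma~\ref{lem:rec.compair}) gives $\P(|D_\sigma^{*}|\ge\tfrac{d}{2}n^{2}q_n^{2}t_n\mid |V_\sigma|<nq_n)\le\P(\mathrm{Bin}(m_0,p_n)\ge\tfrac{d}{2}n^{2}q_n^{2}t_n)$. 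Since $\tfrac{d}{2}n^{2}q_n^{2}t_n=(1+n^{c})\tfrac{d}{2}n^{2}q_n^{2}p_n\ge(1+n^{c})m_0p_n$ and $m_0p_n=\Theta(n^{2-3\alpha})$, Chernoff's upper tail (Lemma~\ref{lemma.gm.cb}) with relative deviation $\delta=n^{c}$ bounds the right-hand side by $\exp\!\big(-\tfrac{n^{2c}}{2+n^{c}}m_0p_n\big)$, which is $\exp(-\Theta(n^{2+c-3\alpha}))$ when $c\ge0$ (and $\exp(-\Theta(n^{2+2c-3\alpha}))$ when $c<0$); the hypothesis $c>3\alpha-2$ makes the exponent's power of $n$ positive. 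Splitting $\P(|D_\sigma^{*}|\ge\tfrac{d}{2}n^{2}q_n^{2}t_n)\le\P(|D_\sigma^{*}|\ge\tfrac{d}{2}n^{2}q_n^{2}t_n,\ |V_\sigma|<nq_n)+\P(|V_\sigma|\ge nq_n)$, combining with the first step, and taking a union bound over the $\le n^{d}$ simplexes, yields $\P(\mathcal N_1(X)\notin\mathcal S)\le n^{d}e^{-\Theta(n^{1-\alpha})}+n^{d}e^{-\Theta(n^{2+c-3\alpha})}\le e^{-an^{b}}$ for suitable $a>0$ and $b=\min\{1-\alpha,\,2+c-3\alpha\}>0$.

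The step I expect to be the real content is the combinatorial identification of $D_\sigma^{*}$ in the second paragraph: recognising which $d$-faces are spanned by a pair of neighbours of $\sigma$ while avoiding $\sigma$, and that each such face has a unique generating triple $(w,v_1,v_2)$. That is exactly what lets one condition on the ``first layer'' of $d$-faces through $\sigma$ and turn $|D_\sigma^{*}|$ into an honest (conditional) Binomial whose parameter is controlled on the good degree event; everything else is the Chernoff-plus-union-bound template already used in Section~\ref{sec.pfmain1}, with $c>3\alpha-2$ entering only to keep the $|D_\sigma^{*}|$ exponent positive.
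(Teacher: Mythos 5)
Your proposal is correct and follows essentially the same route as the paper: Chernoff for $|D_\sigma|\sim\mathrm{Bin}(n-d,p_n)$, a conditional binomial bound for $|D_\sigma^*|$ via the comparison lemma (Lemma~\ref{lem:rec.compair}) and Chernoff with relative deviation $n^c$, and a union bound over the $\le n^d$ simplexes, with $c>3\alpha-2$ keeping the exponent $2+c-3\alpha$ positive. Your explicit combinatorial identification of $D_\sigma^*$ as the faces $(\sigma\setminus\{w\})\cup\{v_1,v_2\}$ with a unique generating triple is a welcome justification of the conditional law $\mathrm{Bin}\bigl(d\binom{|D_\sigma|}{2},p_n\bigr)$, which the paper asserts without proof.
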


\begin{lemma}\label{lem.thm2.2}
	Let $I:=\left\{m \in \mathbb{N}: \left|m- \binom{n}{d+1}p_n\right| < \epsilon \binom{n}{d+1} p_n\right\}$, where $p_n=n^{-\alpha}$. Then
	\[
	\P(|X_{d,p_n}|\in I)\ge 1-e^{-n^d},
	\]
where $|X_{d,p_n}|$ denotes the number of $d$-simplexes in $Y_d(n,p_n)$. 
\end{lemma}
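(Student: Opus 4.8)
The plan is to observe that $|X_{d,p_n}|$ is a sum of independent indicators—one for each of the $\binom{n}{d+1}$ potential $d$-simplexes in $X_d$, each included with probability $p_n$—so that $|X_{d,p_n}| \sim \mathrm{Bin}\!\left(\binom{n}{d+1}, p_n\right)$ and $\E|X_{d,p_n}| = \binom{n}{d+1}p_n$. Membership of $|X_{d,p_n}|$ in the interval $I$ is exactly the event that $|X_{d,p_n}|$ deviates from its mean by less than a factor $\epsilon$ (on either side), so the complement splits into a lower-tail and an upper-tail event, each of which is controlled by Chernoff's bound (Lemma \ref{lemma.gm.cb}) with $\delta = \epsilon$.

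First I would write $N := \binom{n}{d+1}$ and note $\E|X_{d,p_n}| = N p_n$. Applying Lemma \ref{lemma.gm.cb} with $X = |X_{d,p_n}|$ and $\delta = \epsilon$ gives
\begin{align*}
\P\!\left(|X_{d,p_n}| \le (1-\epsilon) N p_n\right) &\le \exp\!\left(-\tfrac{\epsilon^2}{2} N p_n\right),\\
\P\!\left(|X_{d,p_n}| \ge (1+\epsilon) N p_n\right) &\le \exp\!\left(-\tfrac{\epsilon^2}{2+\epsilon} N p_n\right).
\end{align*}
A union bound over these two events yields $\P(|X_{d,p_n}| \notin I) \le 2\exp\!\left(-\tfrac{\epsilon^2}{2+\epsilon} N p_n\right)$.

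It then remains to bound $N p_n = \binom{n}{d+1} n^{-\alpha}$ from below by something that dominates $n^d$, so that the tail bound beats $e^{-n^d}$. Since $d$ is fixed and $\binom{n}{d+1} = \Theta(n^{d+1})$, we have $N p_n = \Theta(n^{d+1-\alpha})$, and because $\alpha < 1$ the exponent $d+1-\alpha > d$; hence $\tfrac{\epsilon^2}{2+\epsilon} N p_n \ge n^d$ for all large $n$, giving $\P(|X_{d,p_n}| \notin I) \le 2e^{-n^d} \le e^{-n^d}$ once $n$ is large (absorbing the constant $2$ into the exponent, or stating the bound up to a harmless adjustment of constants as is standard in the paper). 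There is no real obstacle here—the only point requiring a line of care is checking that the binomial mean $\binom{n}{d+1}p_n$ grows strictly faster than $n^d$, which is where the hypothesis $\alpha < 1$ enters.
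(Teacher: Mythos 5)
Your proof is correct and follows essentially the same route as the paper: identify $|X_{d,p_n}|$ as $\mathrm{Bin}\bigl(\binom{n}{d+1},p_n\bigr)$, apply the two-sided Chernoff bound with $\delta=\epsilon$, and check that the exponent $\Theta\bigl(\binom{n}{d+1}p_n\bigr)=\Theta(n^{d+1-\alpha})$ dominates $n^d$ because $\alpha<1$. If anything, your final step is spelled out more carefully than the paper's, which invokes the (wrong-direction) inequality $\binom{n}{d+1}\le n^{d+1}$ where a lower bound $\binom{n}{d+1}=\Theta(n^{d+1})$ is what is actually needed.
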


\begin{lemma}\label{lem.thm2.3}
	Let  $\mathcal S$ be as defined in \eqref{eqn:S}.  If  $c<2\alpha-1$ then 
	\[
	\max_{m \in I}\frac{n^d|\mathcal S|}{\binom{\binom{n}{d+1}}{m}}=o(e^{-n^s}),
	\]
	for some $s>0$. 
\end{lemma}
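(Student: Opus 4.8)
The plan is to bound $|\mathcal S|$ from above by controlling, for each fixed $\sigma\in X_{d-1}$, how many isomorphism types the triple $N_{1,X}(\sigma)=(S_\sigma,D_\sigma,D_\sigma^*)$ can take once the two constraints in \eqref{eqn:S} are imposed, and then to compare the result with a lower bound for $\binom{\binom{n}{d+1}}{m}$, $m\in I$ (with $I$ as in Lemma~\ref{lem.thm2.2}). The key structural remark is that, for any complex of the form \eqref{eqn:complex}, $D_\sigma$ consists of \emph{all} $\deg(\sigma)$ faces $\sigma\cup\{v\}\in X^d$, so as an abstract complex it is just ``$\sigma$ with $\deg(\sigma)$ petals attached''; likewise $S_\sigma$ has $d\deg(\sigma)$ elements and its combinatorial structure is determined by $\deg(\sigma)$. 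Hence the isomorphism type of $N_{1,X}(\sigma)$ is determined by the single integer $k:=\deg(\sigma)=|D_\sigma|$ together with the set $D_\sigma^*$ of at most $\tfrac{d}{2}n^{2}q_n^{2}t_n$ further $d$-simplices, considered up to relabelling of the $d+k$ vertices on which $N_{1,X}(\sigma)$ is supported. Therefore, for a collection belonging to $\mathcal S$ (so $k<nq_n$ and $|D_\sigma^*|<\tfrac{d}{2}n^{2}q_n^{2}t_n$ for every $\sigma$), the number of isomorphism types of $N_{1,X}(\sigma)$ is at most
\[
T:=\big(\lceil nq_n\rceil+1\big)\,\big(d+\lceil nq_n\rceil\big)^{d}\sum_{j\le \frac{d}{2}n^{2}q_n^{2}t_n}\binom{\binom{d+\lceil nq_n\rceil}{d+1}}{j},
\]
the three factors bounding the choices of $k$, of which $d$ vertices play the role of $\sigma$, and of $D_\sigma^*$.

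Next I would estimate $T$. Since $q_n=(1+\epsilon)n^{-\alpha}$ and $t_n=(1+n^{c})n^{-\alpha}$, one has $nq_n=\Theta(n^{1-\alpha})$, hence $\binom{d+\lceil nq_n\rceil}{d+1}=O(n^{(1-\alpha)(d+1)})$, while $\tfrac{d}{2}n^{2}q_n^{2}t_n=\Theta\!\big(n^{2-3\alpha}(1+n^{c})\big)$. Put $\beta:=\max\{0,\,2-3\alpha+\max\{0,c\}\}$. If $2-3\alpha+\max\{0,c\}\le 0$ the quantity $\tfrac{d}{2}n^{2}q_n^{2}t_n$ tends to $0$, so $D_\sigma^*=\emptyset$ for large $n$ and the last factor of $T$ equals $1$; otherwise $\tfrac{d}{2}n^{2}q_n^{2}t_n\le n^{\beta}$ for large $n$. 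In all cases $\beta<1-\alpha$: if $\beta=0$ because $\alpha<1$, if $\beta=2-3\alpha$ (the case $c\le0$) because $\alpha>1/2$, and if $\beta=2+c-3\alpha$ (the case $c\ge0$) because $c<2\alpha-1$ --- this is the only point at which the hypothesis $c<2\alpha-1$ enters. Using $\binom{M}{\le K}\le(K+1)M^{K}$ with $M=O(n^{(1-\alpha)(d+1)})$ and $K\le n^{\beta}$ we get $T\le\exp\!\big(O(n^{\beta}\log n)\big)$. A collection in $\mathcal S$ is specified, up to isomorphism of each member, by recording the isomorphism type of $N_{1,X}(\sigma)$ for each of the $\binom{n}{d}$ simplices $\sigma\in X_{d-1}$, so
\[
|\mathcal S|\le T^{\binom{n}{d}}\le \exp\!\big(O(n^{d+\beta}\log n)\big),\qquad d+\beta<d+1-\alpha .
\]

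For the denominator, fix $m\in I$, write $N:=\binom{n}{d+1}$, and note $m\le(1+\epsilon)Np_n$ and $m\ge(1-\epsilon)Np_n\ge c_0\,n^{d+1-\alpha}$ for a constant $c_0>0$, so $N/m\ge n^{\alpha}/(1+\epsilon)\ge n^{\alpha}/2$. Then
\[
\binom{N}{m}\ge\Big(\frac{N}{m}\Big)^{m}\ge \Big(\frac{n^{\alpha}}{2}\Big)^{m}=\exp\!\big(m(\alpha\log n-\log 2)\big)\ge\exp\!\big(c_1 n^{d+1-\alpha}\log n\big)
\]
for all large $n$ and all $m\in I$, with $c_1>0$ independent of $m$. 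Combining the two displays,
\[
\frac{n^{d}\,|\mathcal S|}{\binom{\binom{n}{d+1}}{m}}\le\exp\!\Big(d\log n+O(n^{d+\beta}\log n)-c_1 n^{d+1-\alpha}\log n\Big),
\]
and since $d+\beta<d+1-\alpha$ the exponent is at most $-\tfrac{c_1}{2}n^{d+1-\alpha}\log n$ for all large $n$, uniformly in $m\in I$. As $d\ge1$ and $\alpha<1$ we have $d+1-\alpha>1$, so the right-hand side is $o(e^{-n^{s}})$ for any $0<s<d+1-\alpha$ (e.g. $s=1$); taking the maximum over $m\in I$ gives the lemma.

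The step I expect to be the real obstacle is the structural remark in the first paragraph: one has to recognise that the isomorphism type of $N_{1,X}(\sigma)$ is cheap to encode --- one integer plus the small set $D_\sigma^*$ --- rather than recording which vertices are the neighbours of $\sigma$, which would cost $\Theta(n^{1-\alpha}\log n)$ ``bits'' per $\sigma$ and would make $|\mathcal S|$ of the same order as $\binom{\binom{n}{d+1}}{m}$, so that the ratio would fail to be small. Once that is in place, the hypothesis $c<2\alpha-1$ serves only to keep $|D_\sigma^*|\le n^{\beta}$ with $\beta<1-\alpha$, i.e. to keep the contribution of the $D_\sigma^*$'s to $\log|\mathcal S|$ of strictly smaller order than $\log\binom{\binom{n}{d+1}}{m}$; the remaining estimates of binomial coefficients are routine.
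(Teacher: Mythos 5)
Your proof is correct and follows essentially the same route as the paper's: both bound $|\mathcal S|$ by a product over the $\binom{n}{d}$ simplices $\sigma\in X_{d-1}$ of (choices of $\deg(\sigma)$) times (choices of the set $D_\sigma^*$), giving $\log|\mathcal S|=O\big(n^{d+2+c-3\alpha}\log n\big)$, and compare this with $\log\binom{\binom{n}{d+1}}{m}\gtrsim n^{d+1-\alpha}\log n$ via the same $\binom{N}{m}\ge (N/m)^m$ estimate, the hypothesis $c<2\alpha-1$ entering at exactly the comparison you identify. (One cosmetic remark: the paper's proof, and its use in Theorem \ref{main.thm.2}, actually replaces the numerator $n^d$ by $n^d!$; your final exponent $-c_1n^{d+1-\alpha}\log n$ absorbs the resulting extra $dn^d\log n$ term just as easily, since $d<d+1-\alpha$.)
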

\begin{proof}[Proof of Theorem \ref{main.thm.2}]
     We consider a particular complex reconstruction algorithm which outputs a complex when a collection of $1$-neighbourhoods is given. Let $\mathcal S$ be a collection of $1$ neighbourhoods  as defined in \eqref{eqn:S}. The algorithm maps each element of $\mathcal S$ to an isomorphism class, which corresponds to at most $n^d!$ labelled complexes. The algorithm fails if $X\in Y_d(n,p_n)$ such that $\mathcal N_1(X)\in \mathcal S$ but the output of the algorithm of $\mathcal N_1(X)$ is not isomorphic to $X$.

    
     We condition on the event $|X_{d,p_n}|=m$ for some $m\in \mathbb{N}$. Given this information, there are 
    $\binom{\binom{n}{d+1}}{m}$ 
    possible labelled $d$-simplexes which may be chosen with equal probability.  Therefore, conditioned on $|X_{d,p_n}|=m$, the algorithm fails when any complex $X$ is not achieved by the algorithm output. Let $p_m$ denote the probability of failure given $|X_{d,p_n}|=m$. Thus, 
    \begin{align*}
    	p_m&=\P(\mbox{ algorithm fails}\given |X_{d,p_n}|=m)
        \\&\ge P\left(\mathcal{N}_1(X) \in \mathcal S | |X_{d,p_n}|=m \right) - \frac{n^d!|\mathcal S|}{\binom{\binom{n}{d+1}}{m}} 
        \\&= \frac{\binom{\binom{n}{d+1}}{m} - n^d!|\mathcal S|}{\binom{\binom{n}{d+1}}{m}} - P\left(\mathcal{N}_1(X) \notin \mathcal S | |X_{d,p_n}|=m \right).
    \end{align*}
    Let $I \subseteq \left\{1,2, \ldots,\binom{n}{d+1}\right\}$ be as defined in Lemma \ref{lem.thm2.2}. Let $p^*$ denote the overall failure probability of the algorithm. Then  
    \begin{align*}
    	p^*&\ge \sum_{m\in I}p_m
        \\&\ge \sum_{m \in I} \frac{\binom{\binom{n}{d+1}}{m}- n^d!|\mathcal S|}{\binom{\binom{n}{d+1}}{m}} P(|X_{d,p_n}|=m) - P\left(\{\mathcal{N}_1(X) \notin \mathcal S\} \cap \{|X_{d,p_n}| \in I\} \right)\\
        & \geqslant P(|X_{d,p_n}| \in I) \min_{m \in I} \frac{\binom{\binom{n}{d+1}}{m}- n^d!|\mathcal S|}{\binom{\binom{n}{d+1}}{m}} - P(\mathcal{N}_1(X) \notin \mathcal S).
    \end{align*}
    Therefore Lemmas \ref{lem.thm2.1}, \ref{lem.thm2.2} and \ref{lem.thm2.3} implies that 
    \[
    p^*\ge (1-e^{-n^d})(1-e^{-n^s})-e^{-an^b}\ge 1-e^{-a'n^{b'}},
    \]
    for some $s,a,b,a',b'>0$, if  the constant $c$ satisfies
    $$\max\{0,3\alpha-2\} < c < \min\{\alpha,2\alpha-1\}.$$
    The above is satisfied when $1/2< \alpha <1$ as required. The proof is then completed by choosing $c=1/2(\max{0,3\alpha-2}+2\alpha-1)$ since $\min\{\alpha,2\alpha-1\} = 2\alpha-1$.
\end{proof}

    The rest of the section is dedicated to prove Lemmas  \ref{lem.thm2.1}, \ref{lem.thm2.2} and \ref{lem.thm2.3}.

\begin{proof}[Proof of Lemma \ref{lem.thm2.1}]
        Let $S_\sigma$ denote the set of neighbours of $\sigma\in X_{d-1}$ in $Y_d(n,p_n)$. Consequently we define $D_{\sigma}$ and $D_\sigma^*$ as defined above. Note that $S_\sigma$ is a random set, hence $D_{\sigma}$ and $D_\sigma^*$ are random.
       We show that if $c>3\alpha -2$ then
        \begin{align}\label{eqn:lem6}
        P\left(\bigcap_{\sigma \in X_{d-1}} \Big(\{|D_{\sigma}| < nq_n \}\cap\{|D_{\sigma}^*| < \frac{d}{2}n^2q_n^2t_n \}\Big)\right)\ge 1-e^{-an^b},
        \end{align}
    for some positive constants $a$ and $b$. Clearly \eqref{eqn:lem6} gives the result.
    
    \vspace{.2cm}
    \noindent {\it Proof of \eqref{eqn:lem6}:} Observe that $|D_{\sigma}|=\deg(\sigma)\sim Bin(n-d,p_n)$. Then Lemma \ref{lemma.gm.cb} gives 
    \begin{align}\label{pf.lem.1.eq2}
    	P\left(|D_{\sigma}| < nq_n\right) &\geqslant P\left( |D_{\sigma}| < (n-d)q_n \right)\nonumber\\
    	&=1- P\left( |D_{\sigma}|\geqslant (1+\epsilon)(n-d)q_n \right)\nonumber\\
    	&\geqslant 1-\exp\left(-\frac{\epsilon^2p_n(n-d)}{3} \right).
    \end{align}
     Next we derive a bound of    $|D_{\sigma}^*|$ with high probability.  We have
        \begin{align}\label{pf.lem.1.eq1}
            &P\left(|D_{\sigma}^*| \geqslant \frac{1}{2}n^{2}q_n^{2}t_n)  \right)\nonumber 
            \\& \leqslant P\left(|D_{\sigma}^*| \geqslant \frac{1}{2}n^{2}q_n^{2}t_n \Big| |D_{\sigma}| < nq_n)  \right) + P(|D_{\sigma}| \geqslant nq_n).
        \end{align}
        Note that, conditioned on $|D_{\sigma}|$, $|D_{\sigma}^*|\sim dBin(\binom{|D_{\sigma}|}{2}, p_n)$.  Lemmas \ref{lemma.gm.cb} and \ref{lem:rec.compair} imply
        \begin{align}\label{pf.lem.1.eq3}
            P\left(|D_{\sigma}^*|\geqslant \frac{d}{2}n^{2}q_n^{2}t_n| |D_{\sigma}| \le nq_n)  \right) &\leqslant P\left(|D_{\sigma}^*|\geqslant \frac{d}{2}n^{2}q_n^{2}t_n| |D_{\sigma}| = nq_n)  \right) \nonumber\\
            &\leqslant \exp\left(-\frac{n^{2c}d}{2+n^c} \binom{nq_n}{2}p_n \right).
        \end{align}
        Combining the bounds from \eqref{pf.lem.1.eq2} and \eqref{pf.lem.1.eq3} and substituting in \eqref{pf.lem.1.eq1}, we obtain
        $$P\left(|D_{\sigma}^*|\geqslant \frac{d}{2}n^2q_n^2t_n)  \right) \leqslant \exp\left(-\frac{n^{2c}d}{2+n^c} \binom{nq_n}{d+1}p_n \right) + \exp\left(-\frac{\epsilon^2p_n(n-d)}{3} \right) .$$
        By the union bound, the required probability is given by
        \begin{align*}
            &P\left(\bigcap_{\sigma \in X_{d-1}} \Big(\{|D_{\sigma}| < nq_n \}\cap\{|D_{\sigma}^*|< \frac{d}{2}n^2q_n^2t_n \}\Big)\right) \\
            &\geqslant 1 - \binom{n}{d} \exp\left(-\frac{n^{2c}d}{2+n^c} \binom{nq_n}{2}p_n \right) + \binom{n}{d} \exp\left(-\frac{\epsilon^2p_n(n-1)}{3} \right)\\
            &\geqslant 1-n^d \exp\left(-C_3n^{2+c-3\alpha} \right) -n^d\exp\left(-C_4n^{1-\alpha} \right),
        \end{align*}
        for some positive constants $C_3$, $C_4$. The above will give us a high probability bound when $2+c-3\alpha >0$. Hence the result  if $c>3\alpha -2$, as $\alpha < 1$,  .
    \end{proof}
    
     \begin{proof}[Proof of Lemma \ref{lem.thm2.2}]
        Note that $|X_{d,p_n}\sim Bin(\binom{n}{d+1}, p_n)|$. Then Lemma \ref{lemma.gm.cb} gives
        $$P(|X_{d,p_n}| \notin I) =P\left( \left||X_{d,p_n}|- \binom{n}{d+1}p_n\right| \geqslant \epsilon \binom{n}{d+1}p_n \right) \leqslant 2\exp \left( -\frac{\epsilon^2}{3}\binom{n}{d+1}p_n\right).$$
        Using $p_n=n^{-\alpha}$ and $\binom{n}{d+1}\le n^{d+1}$, we get the result.
    \end{proof}

\begin{proof}[Proof of Lemma \ref{lem.thm2.3}]
    By the definition  of $\mathcal S$, we have $|D_\sigma|\in \{1,\ldots,q_nn\}$ and $|D_{\sigma}^*|\in \{1,\ldots,\frac{d}{2}n^2q_n^2t_n \}$ for all $\sigma\in X_{d-1}$. Again the choices for the number of neighbouring $d$-simplexes is upper bounded by $\binom{d\binom{nq_n}{2}}{\frac{d}{2}n^2q_n^2t_n}$ for each $\sigma$.  Therefore
    \begin{align}\label{pf.thm.2.eq4}
        |\mathcal S| &\leqslant \left(nq_n \cdot \frac{d}{2}n^2q_n^2t_n \cdot \binom{d\binom{nq_n}{2}}{\frac{d}{2}n^2q_n^2t_n} \right)^{\binom{n}{d}} 
        \leqslant \left( \frac{d}{2}n^3q_n^3t_n \left( \frac{e}{t_n}\right)^{\frac{d}{2}n^2q_n^2t_n} \right)^{\binom{n}{d}},
    \end{align}
in the last inequality we use the fact that $\binom{n}{k} \leqslant \left(\frac{\e n}{k} \right)^k$. 
        Now,
        \begin{align}\label{pf.thm.2.eq5}
            \min_{m \in I}\binom{\binom{n}{d+1}}{m}= \binom{\binom{n}{d+1}}{(1-\eps)\binom{n}{d+1}p_n}  \geqslant \left( \frac{1}{(1-\epsilon)p_n} \right)^{(1-\epsilon)\binom{n}{d+1}p_n}. 
        \end{align}
        Therefore, using \eqref{pf.thm.2.eq4}, \eqref{pf.thm.2.eq5} and the fact that $n^d!\leqslant \exp\left(dn^d\log(n)\right)$, we get
        \begin{align}\label{eqn:boundupper}
            \max_{m \in I} \frac{n^d!|S|}{\binom{\binom{n}{d+1}}{m}} &\leqslant \frac{\exp\left(dn^d\log(n)\right)\left( \frac{d}{2}n^3q_n^3t_n \left( \frac{e}{t_n}\right)^{\frac{d}{2}n^2q_n^2t_n} \right)^{\binom{n}{d}}}{\left( \frac{1}{(1-\epsilon)p_n} \right)^{(1-\epsilon)\binom{n}{d+1}p_n}}\nonumber\\
            &=\exp\bigg\{dn^d\log(n) +\binom{n}{d}\log(\frac{d}{2}n^3q_n^3t_n) + \binom{n}{d}\frac{d}{2}n^2q_n^2t_n \log \left( \frac{e}{t_n}\right) ) \nonumber
            \\ &\hspace{2cm}- (1-\epsilon)\binom{n}{d+1} p_n \log \left( \frac{1}{(1-\epsilon)p_n} \right) \bigg\} \nonumber \\
            &\le \exp\bigg\{dn^d\log(n) + C_5n^d\log (n^{3+c-4\alpha}) +C_6 n^{c-3\alpha+d+2}\log (n) \\&\hspace{2cm}-C_7 n^{1+d-\alpha} \log(n) \bigg\}\nonumber,
        \end{align}
       for some positive constants $C_5,C_6,C_7$. The right hand side of \eqref{eqn:boundupper} goes to zero exponentially when $d< d+1-\alpha$ (which is always true since $\alpha<1$) and $c-3\alpha+d+2 < 1+d-\alpha$. In other words, the required condition  is $c<2\alpha-1$. 
    \end{proof}

	
	\section*{Acknowledgement} The research of KA was partially supported by the Inspire Faculty Fellowship: DST/INSPIRE/04/2020/000579. SC's research was supported by the NBHM postdoctoral fellowship (order no. 0204/19/2021/R\&D-II/11871).

	\bibliographystyle{abbrv}
	\bibliography{practicebib}
\end{document}